\newenvironment{lyxlist}[1]
	{\begin{list}{}
		{\settowidth{\labelwidth}{#1}
		 \setlength{\leftmargin}{\labelwidth}
		 \addtolength{\leftmargin}{\labelsep}
		 }}
	{\end{list}}
\theoremstyle{plain}
\newtheorem{lem}{\protect\lemmaname}
\theoremstyle{plain}
\newtheorem{thm}{\protect\theoremname}
\theoremstyle{plain}
\newtheorem{cor}{\protect\corollaryname}
\providecommand{\corollaryname}{Corollary}
\providecommand{\lemmaname}{Lemma}
\providecommand{\theoremname}{Theorem}
\begin{document}
\title{Asymptotic normality of the time-domain generalized least squares
estimator for linear regression models}
\author{Hien D. Nguyen\thanks{Corresponding author email: h.nguyen5@latrobe.edu.au. $^{1}$Department
of Mathematics and Statistics, La Trobe University, Bundoora Melbourne
3086, Victoria Australia.}$\text{ }{}^{1}$}
\maketitle
\begin{abstract}
In linear models, the generalized least squares (GLS) estimator is
applicable when the structure of the error dependence is known. When
it is unknown, such structure must be approximated and estimated in
a manner that may lead to misspecification. The large-sample analysis
of incorrectly-specified GLS (IGLS) estimators requires careful asymptotic
manipulations. When performing estimation in the frequency domain,
the asymptotic normality of the IGLS estimator, under the so-called
Grenander assumptions, has been proved for a broad class of error
dependence models. Under the same assumptions, asymptotic normality
results for the time-domain IGLS estimator are only available for
a limited class of error structures. We prove that the time-domain
IGLS estimator is asymptotically normal for a general class of dependence
models.
\end{abstract}
\begin{quote}
Keywords: asymptotic normality; autoregressive models; generalized
least squares; misspecification; time-series analysis
\end{quote}

\section{\label{sec:Introduction}Introduction}

Let $\left\{ \mathbf{x}_{t}\right\} _{t=1}^{T}$ be a non-stochastic
sequence of vectors, such that $\mathbf{x}_{t}^{\top}=\left(x_{1t},\dots x_{dt}\right)\in\mathbb{R}^{1\times d}$,
where $t\in\left[T\right]=\left\{ 1,\dots,T\right\} $, $d,T\in\mathbb{N}$,
and $\left(\cdot\right)^{\top}$ is the matrix transposition operator.
Let $\left\{ U_{t}\right\} _{t=1}^{T}$ be a sequence of random errors.
We are interested in the sequence $\left\{ Y_{t}\right\} _{t=1}^{T}$
that is generated by the relationship
\begin{equation}
Y_{t}=\mathbf{x}_{t}^{\top}\bm{\beta}+U_{t}\text{,}\label{eq: Individual regression}
\end{equation}
where $\bm{\beta}^{\top}=\left(\beta_{1},\dots,\beta_{p}\right)\in\mathbb{R}^{1\times p}$
is a vector of non-stochastic regression coefficients. We can write
relationship (\ref{eq: Individual regression}) in the matrix form:
\[
\mathbf{y}_{T}=\mathbf{X}_{T}\bm{\beta}+\mathbf{u}_{T}\text{,}
\]
simultaneously for all $t\in\left[T\right]$, where $\mathbf{X}_{T}\in\mathbb{R}^{T\times d}$
has rows $\mathbf{x}_{t}^{\top}$, $\mathbf{y}_{T}^{\top}=\left(Y_{1},\dots,Y_{T}\right)\in\mathbb{R}^{1\times T}$,
and $\mathbf{u}_{T}^{\top}=\left(U_{1},\dots,U_{T}\right)\in\mathbb{R}^{1\times T}$.

Relationships that are described by (\ref{eq: Individual regression})
are generally referred to as multiple linear regression models and
are ubiquitous in the study of engineering, natural science, and social
science phenomena (see e.g., \citealp{Weisberg2005}). For general
treatments of the topic of linear regression modeling, we refer the
interested reader to the the manuscripts of \citet{Gross:2003aa},
\citet{Seber:2003aa}, and \citet{Yan:2009aa}.

In this article, we consider the scenario where the sequence of errors
$\left\{ U_{t}\right\} _{t=1}^{T}$ is a finite segment of the stationary
sequence $\left\{ U_{t}\right\} _{t=-\infty}^{\infty}$, such that
\[
U_{t}=\sum_{i=-\infty}^{\infty}\theta_{i}E_{t-i}\text{,}
\]
for each $t\in\left[T\right]$, where $\left\{ E_{t}\right\} _{t=-\infty}^{\infty}$
is an independent sequence of random variables, such that $\mathbb{E}\left(E_{t}\right)=0$
and $\text{var}\left(E_{t}\right)=\sigma^{2}<\infty$. If the sequence
of coefficients $\left\{ \theta_{i}\right\} _{i=-\infty}^{\infty}$
is known, then one can write the covariance matrix of $\mathbf{u}_{T}$:
$\bm{\Sigma}_{T}=\mathbb{E}\left(\mathbf{u}_{T}\mathbf{u}_{T}^{\top}\right)$
and use it to construct, the so-called generalized least squares (GLS)
estimator
\begin{equation}
\tilde{\bm{\beta}}_{T}\left(\bm{\Sigma}_{T}\right)=\left(\mathbf{X}_{T}^{\top}\text{\ensuremath{\bm{\Sigma}}}_{T}^{-1}\mathbf{X}_{T}\right)^{-1}\mathbf{X}_{T}^{\top}\bm{\Sigma}_{T}^{-1}\mathbf{y}_{T}\text{,}\label{eq: GLS}
\end{equation}
which is known to be the best linear unbiased estimator (BLUE) of
$\bm{\beta}$ (cf. \citealp[Sec. 6.1.3]{Amemiya1985}). Furthermore,
Theorem 1 of \citet[Ch. 9]{Baltagi:2002aa} states that under the
condition that $\mathbf{C}_{u}=\lim_{T\rightarrow\infty}T^{-1}\mathbf{X}_{T}^{\top}\bm{\Sigma}_{T}^{-1}\mathbf{X}_{T}^{\top}$
exists, we also have the fact that $\tilde{\bm{\beta}}_{T}\left(\bm{\Sigma}_{T}\right)$
is asymptotically normal in the sense that
\[
T^{1/2}\left[\tilde{\bm{\beta}}_{T}\left(\bm{\Sigma}_{T}\right)-\bm{\beta}\right]\overset{\mathcal{L}}{\longrightarrow}\text{N}\left(\mathbf{0},\mathbf{C}_{u}\right)\text{,}
\]
where we denote convergence in law by $\overset{\mathcal{L}}{\longrightarrow}$
and $\text{N}\left(\mathbf{\bm{\mu}},\bm{\Sigma}\right)$ denotes
a normal distribution with mean vector $\mathbf{0}$ and covariance
matrix $\bm{\Sigma}$.

In applications, the coefficients $\left\{ \theta_{i}\right\} _{i=-\infty}^{\infty}$
are rarely if ever known. Thus, the DGP of the error sequence $\left\{ U_{t}\right\} _{t=-\infty}^{\infty}$
is also unknown. In order to proceed to make inference, one generally
assumes a hypothetical DGP for $\left\{ U_{t}\right\} _{t=-\infty}^{\infty}$
that is equivalent to that of some error sequence $\left\{ V_{t}\right\} _{t=-\infty}^{\infty}$.

Let $\left\{ V_{t}\right\} _{t=1}^{T}$ be a finite segment of $\left\{ V_{t}\right\} _{t=-\infty}^{\infty}$,
and let $\mathbf{v}_{T}^{\top}=\left(V_{1},\dots,V_{T}\right)\in\mathbb{R}^{1\times T}$.
Furthermore, write the covariance matrix $\mathbf{v}_{T}$ as $\bm{\Lambda}_{T}=\mathbb{E}\left(\mathbf{v}_{T}\mathbf{v}_{T}^{\top}\right)$.
Then, by replacing $\bm{\Sigma}_{T}$ in (\ref{eq: GLS}) by $\bm{\Lambda}_{T}$,
we obtain the so-called incorrectly-specified GLS (IGLS; \citealp{Koreisha2001})
estimator

\begin{equation}
\tilde{\bm{\beta}}_{T}\left(\bm{\Lambda}_{T}\right)=\left(\mathbf{X}_{T}^{\top}\text{\ensuremath{\bm{\Lambda}}}_{T}^{-1}\mathbf{X}_{T}\right)^{-1}\mathbf{X}_{T}^{\top}\bm{\Lambda}_{T}^{-1}\mathbf{y}_{T}\text{.}\label{eq: IGLS-1}
\end{equation}

The finite-sample properties of (\ref{eq: IGLS-1}) were studied comprehensively
in \citet{Koreisha2001} and \citet{Kariya2004}. General asymptotic
results for the IGLS estimator are more difficult to establish, and
thus only a small number of results are available in the literature.
For example, \citet{Rothenberg1984} considered asymptotic normality
of the IGLS estimator, when $\bm{\Lambda}_{T}$ has first-order autoregressive
(AR) form. Some consistency results regarding the IGLS estimator appear
in \citet{Samarov1987} and \citet{Koreisha2001}. To date, the most
general set of asymptotic theorems regarding the IGLS estimator are
those reported in \citet{Amemiya1973}.

Make the so-called Grenander regularity conditions \citep{Grenander1954}:
\begin{lyxlist}{00.00.0000}
\item [{{[}Gren1{]}}] $\lim_{T\rightarrow\infty}s_{iT}^{2}=\infty$ ($i\in\left[p\right]$),
where $s_{iT}^{2}=\sum_{t=1}^{T}x_{it}^{2}$, 
\item [{{[}Gren2{]}}] $\lim_{T\rightarrow\infty}\left(x_{iT}^{2}/s_{iT}^{2}\right)=0$
($i\in\left[p\right]$), 
\item [{{[}Gren3{]}}] $\lim_{T\rightarrow\infty}\left[\left(s_{iT}s_{jT}\right)^{-1}\sum_{t=1}^{T-k}x_{it}x_{j,t+k}\right]=\rho_{ij}\left(k\right)$
exists for each $i,k\in\left[p\right]$, and
\item [{{[}Gren4{]}}] the matrix $\mathbf{R}\left(0\right)$ is non-singular,
where $\mathbf{R}\left(k\right)$ has element $\rho_{ij}\left(k\right)$
in the $i\text{th}$ row and $j\text{th}$ column.
\end{lyxlist}
Furthermore, make the following additional assumptions.
\begin{lyxlist}{00.00.0000}
\item [{{[}Amem1{]}}] The $t\text{th}$ elements of $\left\{ U_{t}\right\} _{t=-\infty}^{\infty}$
have the form
\[
U_{t}=\sum_{i=1}^{\infty}\alpha_{i}U_{t-i}+E_{t}\text{,}
\]
where $\left\{ E_{t}\right\} _{t=-\infty}^{\infty}$ is a sequence
of independent random variables such that $\mathbb{E}\left(E_{t}\right)=0$
and $\text{var}\left(E_{t}\right)=\sigma^{2}<\infty$, and $\left\{ \alpha_{i}\right\} _{i=1}^{\infty}$
is such that $\left|1-\sum_{i=1}^{\infty}\alpha_{i}e^{\iota i\omega}\right|^{2}>0$
and $\sum_{i=1}^{N}\left|\alpha_{N+i}\right|=O\left(c^{N}\right)$,
where $c\in\left[0,1\right)$ and $\omega\in\left[-\pi,\pi\right]$.
\item [{{[}Amem2{]}}] The sequence $\left\{ U_{t}\right\} _{t=-\infty}^{\infty}$
is hypothesized to be equivalent to the stationary autoregressive
(AR) process $\left\{ V_{t}\right\} _{t=-\infty}^{\infty}$, with
$t\text{th}$ term of the form
\[
V_{t}=\sum_{i=1}^{N}\kappa_{i}V_{t-i}+E_{t}\text{,}
\]
where $\left\{ E_{t}\right\} _{t=-\infty}^{\infty}$ is a sequence
of independent random variables such that $\mathbb{E}\left(E_{t}\right)=0$
and $\text{var}\left(E_{t}\right)=\sigma^{2}<\infty$, and $\left\{ \kappa_{i}\right\} _{i=1}^{N}$
are such that the roots of $\sum_{i=1}^{N}\kappa_{i}\zeta^{i}=1$
(with respect to $\zeta$) are all outside of the unit circle.
\end{lyxlist}
Here $\iota=\sqrt{-1}$ denotes the imaginary unit. See \citet[Ch. 5.2]{Amemiya1985}
for more details regarding AR processes.

Let $\mathbf{S}_{T}$ be the diagonal matrix with $i\text{th}$ element
$s_{iT}$, for $i\in\left[p\right]$. Further, let $\mathbf{Z}_{T}=\mathbf{X}_{T}\mathbf{S}_{T}^{-1}$
and define $\mathbf{H}\left(\omega\right)$ to be a Hermitian matrix
function with positive semidefinite increments such that $\mathbf{R}\left(k\right)=\int_{-\pi}^{\pi}e^{\iota k\omega}\text{d}\mathbf{H}\left(\omega\right)$.
Under {[}Gren1{]}--{[}Gren4{]}, {[}Amem1{]}, and {[}Amem2{]}, \citet{Amemiya1973}
proved that

\[
\mathbf{S}_{T}\left[\tilde{\bm{\beta}}_{T}\left(\bm{\Lambda}_{T}\right)-\bm{\beta}\right]\overset{\mathcal{L}}{\longrightarrow}\text{N}\left(\mathbf{0},\mathbf{C}_{v}\right)\text{,}
\]
where

\[
\mathbf{C}_{v}=\lim_{T\rightarrow\infty}\left(\mathbf{Z}_{T}^{\top}\bm{\Lambda}_{T}^{-1}\mathbf{Z}_{T}\right)^{-1}\mathbf{Z}_{T}^{\top}\bm{\Lambda}_{T}^{-1}\bm{\Sigma}_{T}\bm{\Lambda}_{T}^{-1}\mathbf{Z}_{T}\left(\mathbf{Z}_{T}^{\top}\bm{\Lambda}_{T}^{-1}\mathbf{Z}_{T}\right)^{-1}\text{.}
\]

Furthermore, \citet{Amemiya1973} showed that if we denote the spectral
density functions (SDFs) of the processes $\left\{ U_{t}\right\} _{t=-\infty}^{\infty}$
and $\left\{ V_{t}\right\} _{t=-\infty}^{\infty}$ by $f_{u}\left(\omega\right)$
and $f_{v}\left(\omega\right)$, respectively, then we may write $\mathbf{C}_{v}$
in the spectral form

\begin{equation}
\mathbf{C}_{v}=2\pi\left[\int_{-\pi}^{\pi}\frac{1}{f_{v}\left(\omega\right)}\text{d}\mathbf{H}\left(\omega\right)\right]^{-1}\int_{-\pi}^{\pi}\frac{f_{u}\left(\omega\right)}{f_{v}^{2}\left(\omega\right)}\text{d}\mathbf{H}\left(\omega\right)\left[\int_{-\pi}^{\pi}\frac{1}{f_{v}\left(\omega\right)}\text{d}\mathbf{H}\left(\omega\right)\right]^{-1}\text{.}\label{eq: Spectral IGLS Covariance-1}
\end{equation}
It is remarkable that the IGLS is a time-domain estimator that can
be proved to have a covariance matrix with simple spectral form.

Since the SDF of $\left\{ V_{t}\right\} _{t=-\infty}^{\infty}$ can
be written as
\begin{equation}
f_{v}\left(\omega\right)=\left(2\pi\right)^{-1}\sum_{i=-\infty}^{\infty}\eta_{i}e^{-\iota i\omega}\text{,}\label{eq: fv}
\end{equation}
where $\eta_{i}=\text{cov}\left(V_{t},V_{t+i}\right)$ and $\eta_{i}=\eta_{-i}$
for each $i\in\mathbb{Z}$, we can write

\begin{equation}
\mathbb{E}\left(\mathbf{v}_{T}\mathbf{v}_{T}^{\top}\right)=\bm{\Lambda}_{T}=\left[\begin{array}{ccccc}
\eta_{0} & \eta_{1} & \eta_{2} & \cdots & \eta_{T-1}\\
\eta_{1} & \eta_{0} & \eta_{1}\\
\eta_{2} & \eta_{1} & \eta_{0} &  & \vdots\\
\vdots &  &  & \ddots\\
\eta_{T-1} &  & \cdots &  & \eta_{0}
\end{array}\right]\text{.}\label{eq: autocovariance}
\end{equation}
Thus, since the auto-covariance $\bm{\Lambda}_{T}$ is determined
by (\ref{eq: fv}), we may interchange the notation $\tilde{\bm{\beta}}_{T}\left(\bm{\Lambda}_{T}\right)$
with $\tilde{\bm{\beta}}_{T}\left(f_{v}\right)$. We shall refer to
$\tilde{\bm{\beta}}_{T}\left(f_{v}\right)$ as the time-domain IGLS
estimator in order to differentiate it from the frequency-domain IGLS
estimator that is introduced in the sequel.

Let
\begin{equation}
\mathbf{J}_{\mathbf{x}\mathbf{x}}\left(\omega\right)=\frac{1}{2\pi T}\left(\sum_{t=1}^{T}\mathbf{x}_{t}e^{\iota t\omega}\right)\left(\sum_{t=1}^{T}\mathbf{x}_{t}^{\top}e^{-\iota t\omega}\right)\label{eq: Jxx}
\end{equation}
and
\begin{equation}
\mathbf{J}_{\mathbf{x}Y}\left(\omega\right)=\frac{1}{2\pi T}\left(\sum_{t=1}^{T}\mathbf{x}_{t}e^{\iota t\omega}\right)\left(\sum_{t=1}^{T}Y_{t}e^{-\iota t\omega}\right)\label{eq: Jxy}
\end{equation}
to be the periodogram of $\left\{ \mathbf{x}_{t}\right\} _{t=1}^{T}$
and the cross-spectra periodogram between $\left\{ \mathbf{x}_{t}\right\} _{t=1}^{T}$
and $\left\{ Y_{t}\right\} _{t=1}^{T}$, respectively (see, e.g.,
\citealp[Sec. 11.7]{Brockwell2006}). Using (\ref{eq: Jxx}) and (\ref{eq: Jxy}),
\citet{Hannan1973} proposed to estimate $\bm{\beta}$ by the frequency-domain
IGLS estimator
\begin{equation}
\bar{\bm{\beta}}\left(f_{v}\right)=\left[\sum_{t=1}^{T}f_{v}^{-1}\left(\frac{2\pi t}{T}\right)\mathbf{J}_{\mathbf{x}\mathbf{x}}\left(\frac{2\pi t}{T}\right)\right]^{-1}\sum_{t=1}^{T}f_{v}^{-1}\left(\frac{2\pi t}{T}\right)\mathbf{J}_{\mathbf{x}Y}\left(\frac{2\pi t}{T}\right)\text{.}\label{eq: freq do IGLS}
\end{equation}

Let $\mathcal{F}_{t-1}$ is the $\sigma\text{-algebra}$ that is generated
by the sequence $\left\{ E_{t-i}\right\} _{i=1}^{\infty}$ and make
the following assumptions.
\begin{lyxlist}{00.00.0000}
\item [{{[}Hann1{]}}] The random sequence $\left\{ U_{t}\right\} _{t=-\infty}^{\infty}$
has the form $U_{t}=\sum_{i=-\infty}^{\infty}\theta_{i}E_{t-i}$,
satisfying
\[
\mathbb{E}\left(E_{t}|\mathcal{F}_{t-1}\right)=\mathbb{E}\left(E_{t}^{2}-\mathbb{E}\left(E_{t}^{2}\right)|\mathcal{F}_{t-1}\right)=0
\]
almost surely, and $\sum_{i=-\infty}^{\infty}\left|\theta_{i}\right|<\infty$.
\item [{{[}Hann2{]}}] The random sequence $\left\{ E_{t}\right\} _{t=-\infty}^{\infty}$
has distribution $F_{E_{t}}\left(e\right)$, which satisfies
\[
\lim_{\delta\rightarrow\infty}\sup_{t\in\mathbb{Z}}\int_{\left|e\right|>\delta}e^{2}\text{d}F_{E_{t}}\left(e\right)=0\text{.}
\]
\item [{{[}Hann3{]}}] The SDF $f_{v}\left(\omega\right)$ is real, positive,
continuous, and even over $\omega\in\left[-\pi,\pi\right]$.
\end{lyxlist}
Under {[}Gren1{]}--{[}Gren4{]} and {[}Hann1{]}--{[}Hann3{]}, \citet{Hannan1973}
proved that

\begin{equation}
\mathbf{S}_{T}\left[\bar{\bm{\beta}}\left(f_{v}\right)-\bm{\beta}\right]\overset{\mathcal{L}}{\longrightarrow}\text{N}\left(\mathbf{0},\mathbf{C}_{v}\right)\text{,}\label{eq: AN FD}
\end{equation}
where $\mathbf{C}_{v}$ has form (\ref{eq: Spectral IGLS Covariance-1})
(see also \citealp{Robinson1997}). That is, (\ref{eq: IGLS-1}) and
(\ref{eq: freq do IGLS}) have the same asymptotic distributions when
both {[}Amem1{]} and {[}Amem2{]}, or {[}Hann1{]}--{[}Hann3{]} are
satisfied, in addition to {[}Gren1{]}--{[}Gren4{]}.

It is notable, however, that {[}Hann1{]}--{[}Hann3{]} are more general
assumptions that {[}Amem1{]} and {[}Amem2{]}. Thus, an obvious question
to ask is whether the equivalence in asymptotic distributions between
the time-domain estimator (\ref{eq: IGLS-1}) and frequency-domain
estimator (\ref{eq: freq do IGLS}) remains when one replaces {[}Amem1{]}
and {[}Amem2{]} by assumptions that are more general and closer in
spirit to {[}Hann1{]}--{[}Hann3{]}. In this article, we shall provide
an affirmative answer to this question. Before presenting our main
result, we wish to provide a review of the relevant literature.

The asymptotic covariance form (\ref{eq: freq do IGLS}) was used
by \citet{Engle1974} and \citet{Nicholls1977} to explore the efficiencies
of the OLS estimator, the BLUE, and the IGLS, under various choices
of $f_{u}$ and $f_{v}$, when $d=1$. Some finite sample properties
of the frequency-domain estimator were established in \citet{Engle1974a}
and \citet{Engle1976}.

Under {[}Gren1{]}--{[}Gren4{]}, the IGLS asymptotic covariance (\ref{eq: Spectral IGLS Covariance-1})
was obtained via spectral methods in \citet{Kholevo1969}, \citet{Rozanov1969},
\citet{Kholevo1971}, and \citet[Sec. 7.4]{Ibragimov1978}, under
general conditions (see Lemma \ref{Lem: Ibragimov-1} in the Appendix).
In the cited papers, the IGLS was studied under the name of pseudo-best
estimators. Unfortunately, no asymptotic normality result of the desired
kind were established. It is notable, that \citet{Kholevo1971a} obtained
an asymptotic normality result for the continuous-time least squares
problem that is hypothesized to be transferable to the pseudo-best
estimator case. However, no such result was provided, nor a result
regarding the discrete time case.

Hybrid time and frequency-domain IGLS estimators have also been considered,
as well as extensions upon the frequency-domain estimator theme. Examples
of hybrid estimators include \citet{Samarov1987} and \citet{Hambaba1992}. 

Extensions of the results of \citet{Hannan1973} to account for long-range
dependence appear in \citet{Robinson1997a} and \citet{Hidalgo2002}.
A non-linear frequency-domain estimator appears in \citet{Hannan1971}.
A broad generalization of the frequency-domain estimation approach
to semi-parametric and non-parametric modeling is considered by \citet{Robinson1991}.

Closely related to our article is the report of \citet{Aguero2010},
which establishes the asymptotic equivalence between time and frequency-domain
estimators for linear dynamic system identification problems. See
\citet{Hannan2012} regarding linear dynamic systems. 

Using the Cholesky covariance matrix factorization method of \citet{Wu2003},
\citet{Yang2012} constructed an IGLS estimator that is asymptotically
efficient. Furthermore, they obtain an asymptotic normality result,
under the {[}Gren1{]}--{[}Gren4{]}, using a proof technique that
is adapted from those of \citet[Thm. 10.2.7]{Anderson1971} and \citet[Thm. 9.1.2]{Fuller1996}
(see Lemma \ref{Lem: Anderson 1} in the Appendix). A model averaging
method akin to the construction of \citet{Yang2012} was studied in
\citet{Cheng2015}, and a long memory GLS estimator of the same form
was considered by \citet{Ing2016}.

Also related to our article is the work of \citet{Kapetanios2016},
which proposed to extend the results of \citet{Amemiya1973} in a
different direction. Here, the {[}Gren1{]}--{[}Gren4{]} are replaced
by various stochastic assumptions on the sequences $\left\{ \mathbf{x}_{t}\right\} _{t=1}^{T}$
and $\left\{ U_{t}\right\} _{t=1}^{T}$ that make use of mixing and
stochastic approximation concepts, and higher moment bound (see \citealt[Ch. 6]{Potscher1997}
regarding mixing and approximation concepts). Compared to our work,
the work of \citet{Kapetanios2016} can be seen as a complementary
and parallel direction of generalization of the results of \citet{Amemiya1973}.
Whereas we propose to relax {[}Amem1{]} and {[}Amem2{]}, \citet{Kapetanios2016}
replaces {[}Gren1{]}--{[}Gren4{]}, instead.

The remainder of the manuscript proceeds as follows. In Section 2,
we state and prove our main result. Discussions and remarks are provided
in Section 3. Here, we provide results regarding the practical case,
where $f_{v}$ is both hypothesized and estimated from the data. Necessary
lemmas and technical results are presented in the Appendix.

\section{Main result}

We retain all notation from the introduction. Furthermore for matrices
$\mathbf{A}\in\mathbb{R}^{m\times n}$, let
\[
\left\Vert \mathbf{A}\right\Vert _{\text{op}}=\sup\left\{ \left\Vert \mathbf{A}\bm{x}\right\Vert _{2}/\left\Vert \bm{x}\right\Vert _{2}:\bm{x}\in\mathbb{R}^{n}\backslash\left\{ \mathbf{0}\right\} \right\} 
\]
denote the operator norm, and let
\[
\left\Vert \mathbf{A}\right\Vert _{1}=\max_{j\in\left[n\right]}\sum_{i=1}^{m}\left|a_{ij}\right|\text{ and }\left\Vert \mathbf{A}\right\Vert _{\infty}=\max_{i\in\left[m\right]}\sum_{j=1}^{n}\left|a_{ij}\right|\text{,}
\]
denote the $l_{1}$ and $l_{\infty}$ induced norms, respectively.
For vectors $\bm{a}\in\mathbb{R}^{m}$, we denote the Euclidean norm
of $\bm{a}$ by $\left\Vert \bm{a}\right\Vert _{2}$.

Make the following assumptions.
\begin{lyxlist}{00.00.0000}
\item [{{[}Main1{]}}] The $t\text{th}$ element of the error sequence $\left\{ U_{t}\right\} _{t=-\infty}^{\infty}$
has form
\begin{equation}
U_{t}=\sum_{i=-\infty}^{\infty}\theta_{i}E_{t-i}\text{,}\label{eq: power transfer}
\end{equation}
and $\left\{ E_{t}\right\} _{t=-\infty}^{\infty}$ is an independent
sequence, where
\[
\mathbb{E}\left(E_{t}\right)=0\text{, }\text{var}\left(E_{t}\right)=\sigma^{2}<\infty\text{, }\sum_{i=-\infty}^{\infty}\left|\theta_{i}\right|<\infty\text{, and }0<\left|\sum_{i=-\infty}^{\infty}\theta_{i}e^{-\iota i\omega}\right|\text{.}
\]
\item [{{[}Main2{]}}] The random sequence $\left\{ E_{t}\right\} _{t=-\infty}^{\infty}$
has distribution $F_{E_{t}}\left(e\right)$, which satisfies
\[
\lim_{\delta\rightarrow\infty}\sup_{t\in\mathbb{Z}}\int_{\left|e\right|>\delta}e^{2}\text{d}F_{E_{t}}\left(e\right)=0\text{.}
\]
\item [{{[}Main3{]}}] The SDF $f_{v}\left(\omega\right)$ is real, positive,
continuous, and even over $\omega\in\left[-\pi,\pi\right]$.
\item [{{[}Main4{]}}] The covariance expansion (\ref{eq: fv}) of $f_{v}\left(\omega\right)$
satisfies
\[
\sum_{i=1}^{\infty}i\left|\eta_{i}\right|<\infty\text{.}
\]
\end{lyxlist}
\begin{lem}
Under {[}Gren1{]}--{[}Gren4{]} and {[}Main1{]}--{[}Main4{]}, $\mathbf{S}_{T}\text{var}\left[\tilde{\bm{\beta}}_{T}\left(f_{v}\right)\right]\mathbf{S}_{T}$
approaches
\begin{align}
\mathbf{C}_{v} & =\lim_{T\rightarrow\infty}\left(\mathbf{Z}_{T}^{\top}\bm{\Lambda}_{T}^{-1}\mathbf{Z}_{T}\right)^{-1}\mathbf{Z}_{T}^{\top}\bm{\Lambda}_{T}^{-1}\bm{\Sigma}_{T}\bm{\Lambda}_{T}^{-1}\mathbf{Z}_{T}\left(\mathbf{Z}_{T}^{\top}\bm{\Lambda}_{T}^{-1}\mathbf{Z}_{T}\right)^{-1}\label{eq: Covariance Theorem}\\
 & =2\pi\left[\int_{-\pi}^{\pi}\frac{1}{f_{v}\left(\omega\right)}\text{d}\mathbf{H}\left(\omega\right)\right]^{-1}\int_{-\pi}^{\pi}\frac{f_{u}\left(\omega\right)}{f_{v}^{2}\left(\omega\right)}\text{d}\mathbf{H}\left(\omega\right)\left[\int_{-\pi}^{\pi}\frac{1}{f_{v}\left(\omega\right)}\text{d}\mathbf{H}\left(\omega\right)\right]^{-1}\text{,}\nonumber 
\end{align}
as $T\rightarrow\infty$.
\end{lem}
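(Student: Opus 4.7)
My plan is to derive the first equality in (\ref{eq: Covariance Theorem}) as a purely algebraic sandwich identity and then convert to the spectral form by applying the Toeplitz-asymptotic result already catalogued in the Appendix (Lemma \ref{Lem: Ibragimov-1}). The sandwich step is the standard GLS variance calculation; the spectral step is where the assumptions [Main1]--[Main4] do their work.

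First, since $\mathbf{y}_T = \mathbf{X}_T\bm{\beta} + \mathbf{u}_T$ and [Main1] gives $\mathbb{E}(\mathbf{u}_T) = \mathbf{0}$ with $\text{var}(\mathbf{u}_T) = \bm{\Sigma}_T$, the IGLS estimator satisfies
\[
\tilde{\bm{\beta}}_T(f_v) - \bm{\beta} = (\mathbf{X}_T^\top\bm{\Lambda}_T^{-1}\mathbf{X}_T)^{-1}\mathbf{X}_T^\top\bm{\Lambda}_T^{-1}\mathbf{u}_T.
\]
Taking covariance, then conjugating by $\mathbf{S}_T = \mathbf{S}_T^\top$ and inserting $\mathbf{Z}_T = \mathbf{X}_T\mathbf{S}_T^{-1}$, one obtains the first equality in (\ref{eq: Covariance Theorem}) as an exact identity for every $T$, so only existence of the limit and its spectral form require proof.

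For the second equality, I would note that by (\ref{eq: fv}) and [Main4] the matrix $\bm{\Lambda}_T$ is Toeplitz with symbol $f_v$, that by [Main1] the matrix $\bm{\Sigma}_T$ is Toeplitz with symbol $f_u$, and that by [Main3] the symbol $f_v$ is continuous and bounded away from zero. The infinite Toeplitz operator $\bm{\Lambda}_\infty^{-1}$ acts as multiplication by $1/(2\pi f_v)$ in the Fourier domain, and under [Main4] the truncated inverse $\bm{\Lambda}_T^{-1}$ is asymptotically the Toeplitz matrix with that symbol, with errors concentrated near the edges that vanish after the $\mathbf{S}_T$-normalization. Combined with the Grenander conditions, which identify $d\mathbf{H}(\omega)$ as the weak limit of the empirical spectral measure associated with the columns of $\mathbf{Z}_T$, Lemma \ref{Lem: Ibragimov-1} then delivers
\[
\mathbf{Z}_T^\top\bm{\Lambda}_T^{-1}\mathbf{Z}_T \longrightarrow (2\pi)^{-1}\int_{-\pi}^{\pi} f_v^{-1}(\omega)\, d\mathbf{H}(\omega),
\]
\[
\mathbf{Z}_T^\top\bm{\Lambda}_T^{-1}\bm{\Sigma}_T\bm{\Lambda}_T^{-1}\mathbf{Z}_T \longrightarrow (2\pi)^{-1}\int_{-\pi}^{\pi} \frac{f_u(\omega)}{f_v^2(\omega)}\, d\mathbf{H}(\omega).
\]
Nonsingularity of the first limit follows from [Gren4] together with positivity of $f_v$, so matrix inversion is continuous there; substituting both limits into the sandwich and tracking factors of $2\pi$ (two inverses contribute $(2\pi)^2$, the central factor contributes $(2\pi)^{-1}$, leaving a surviving $2\pi$) yields the stated spectral form of $\mathbf{C}_v$.

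The chief technical obstacle is the asymptotic Toeplitz behaviour of $\bm{\Lambda}_T^{-1}$, which is not itself Toeplitz; the smoothness assumption [Main4], $\sum_{i\ge 1}i|\eta_i| < \infty$, is exactly what is needed to control the truncation error in the relevant quadratic forms. Since this replacement and the corresponding error estimate are encapsulated in Lemma \ref{Lem: Ibragimov-1}, the proof reduces to verifying the hypotheses of that lemma, which is immediate from [Main1]--[Main4] and the Grenander conditions.
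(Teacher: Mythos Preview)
Your approach is essentially the same as the paper's: establish the finite-$T$ sandwich identity and then invoke the appendix lemmas to pass to the spectral limit. Two small corrections on attribution. First, Lemma~\ref{Lem: Ibragimov-1} as stated does not deliver the two component limits you list; it delivers the \emph{whole} sandwich $\mathbf{S}_T\text{var}[\tilde{\bm{\beta}}_T(f_v)]\mathbf{S}_T\to\mathbf{C}_v$ in spectral form directly, so your reconstruction step is unnecessary. The paper instead uses Lemma~\ref{Lem: Anderson 1} for the outer factor $\mathbf{Z}_T^\top\bm{\Lambda}_T^{-1}\mathbf{Z}_T$, applies Lemma~\ref{Lem: Ibragimov-1} for the full sandwich, and then \emph{derives} the middle limit $\mathbf{Z}_T^\top\bm{\Lambda}_T^{-1}\bm{\Sigma}_T\bm{\Lambda}_T^{-1}\mathbf{Z}_T\to(2\pi)^{-1}\int f_u f_v^{-2}\,d\mathbf{H}$ by rearrangement (this middle limit is recorded because it is reused in the proof of Theorem~\ref{Thm: Main result}). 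Second, the hypotheses of Lemma~\ref{Lem: Ibragimov-1} require only positivity and continuity of $f_u,f_v$ and boundedness of $f_u$, which follow from [Main1] (via Lemma~\ref{lem fu rudin}) and [Main3]; assumptions [Main2] and [Main4] are not used in this Lemma at all---your emphasis on [Main4] as ``exactly what is needed'' here is misplaced, since that condition enters only later, in bounding $\|\bm{\Lambda}_T^{-1}\|_1$ and $\|\bm{\Lambda}_T^{-1}\|_\infty$ for the asymptotic normality argument.
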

\begin{proof}
Following from \citet{Amemiya1973}, we write
\begin{equation}
\mathbf{S}_{T}\left[\tilde{\bm{\beta}}_{T}\left(f_{v}\right)-\bm{\beta}\right]=\left(\mathbf{Z}_{T}\bm{\Lambda}_{T}^{-1}\mathbf{Z}_{T}\right)^{-1}\mathbf{Z}_{T}^{\top}\bm{\Lambda}_{T}^{-1}\mathbf{u}_{T}\text{,}\label{eq: Z least squares form}
\end{equation}
and let \textbf{$\mathbf{w}_{T}=\mathbf{Z}_{T}^{\top}\bm{\Lambda}^{-1}\mathbf{u}_{T}$}.
Under {[}Gren1{]}--{[}Gren4{]} and {[}Main3{]},
\[
\lim_{T\rightarrow\infty}\mathbf{Z}_{T}^{\top}\bm{\Lambda}_{T}^{-1}\mathbf{Z}_{T}=\frac{1}{2\pi}\int_{-\pi}^{\pi}\frac{1}{f_{v}\left(\omega\right)}\text{d}\mathbf{H}\left(\omega\right)\text{,}
\]
by Lemma \ref{Lem: Anderson 1}. By Lemma \ref{lem: Gray nonsing},
$\bm{\Lambda}_{T}$ is invertible and thus, for any $T$, $\bm{\Lambda}_{T}^{-1}$
exists. Thus, we have
\[
\mathbf{S}_{T}\text{var}\left[\tilde{\bm{\beta}}_{T}\left(f_{v}\right)\right]\mathbf{S}_{T}=\left(\mathbf{Z}_{T}\bm{\Lambda}_{T}^{-1}\mathbf{Z}_{T}\right)^{-1}\mathbf{Z}_{T}^{\top}\bm{\Lambda}_{T}^{-1}\bm{\Sigma}_{T}\bm{\Lambda}_{T}^{-1}\mathbf{Z}_{T}\left(\mathbf{Z}_{T}\bm{\Lambda}_{T}^{-1}\mathbf{Z}_{T}\right)^{-1}\text{,}
\]
which has the limit, as $T\rightarrow\infty$,
\begin{equation}
\left(\frac{1}{2\pi}\int_{-\pi}^{\pi}\frac{1}{f_{v}\left(\omega\right)}\text{d}\mathbf{H}\left(\omega\right)\right)^{-1}\left[\lim_{T\rightarrow\infty}\mathbf{Z}_{T}^{\top}\bm{\Lambda}_{T}^{-1}\bm{\Sigma}_{T}\bm{\Lambda}_{T}^{-1}\mathbf{Z}_{T}\right]\left(\frac{1}{2\pi}\int_{-\pi}^{\pi}\frac{1}{f_{v}\left(\omega\right)}\text{d}\mathbf{H}\left(\omega\right)\right)^{-1}\text{.}\label{eq: Limit1-1}
\end{equation}

Assumption {[}Main1{]} implies that $f_{u}$ is real and positive
since $\left\{ U_{t}\right\} _{t=-\infty}^{\infty}$ is an absolutely
summable linear filter of the independent finite variance sequence
$\left\{ E_{t}\right\} _{t=-\infty}^{\infty}$ (cf. Theorems 2.11
and 2.12 of \citealp{Fan2003}). Since $f_{v}$ is positive and continuous
by {[}Main3{]} and $f_{u}$ is real, positive and continuous by {[}Main1{]},
we can apply Lemma \ref{Lem: Ibragimov-1} to obtain
\begin{equation}
\lim_{T\rightarrow\infty}\mathbf{S}_{T}\text{var}\left[\tilde{\bm{\beta}}_{T}\left(f_{v}\right)\right]\mathbf{S}_{T}=\mathbf{C}_{v}\text{.}\label{eq: Limit 2}
\end{equation}
Upon substitution of (\ref{eq: Limit1-1}) into the left-hand side
(LHS) of (\ref{eq: Limit 2}) and rearrangement, we obtain

\begin{equation}
\lim_{T\rightarrow\infty}\mathbf{Z}_{T}^{\top}\bm{\Lambda}_{T}^{-1}\bm{\Sigma}_{T}\bm{\Lambda}_{T}^{-1}\mathbf{Z}_{T}=\frac{1}{2\pi}\int_{-\pi}^{\pi}\frac{f_{u}\left(\omega\right)}{f_{v}^{2}\left(\omega\right)}\text{d}\mathbf{H}\left(\omega\right)\text{,}\label{eq: Limit 3}
\end{equation}
and have thus verified (\ref{eq: Covariance Theorem}).
\end{proof}
\begin{thm}
\label{Thm: Main result}Under {[}Gren1{]}--{[}Gren4{]} and {[}Main1{]}--{[}Main4{]},
\begin{equation}
\mathbf{S}_{T}\left(\tilde{\bm{\beta}}_{T}\left(f_{v}\right)-\bm{\beta}\right)\overset{\mathcal{L}}{\longrightarrow}\text{N}\left(\mathbf{0},\mathbf{C}_{v}\right)\text{,}\label{eq: Asymptotic Normal}
\end{equation}
where $\mathbf{C}_{v}$ has the form (\ref{eq: Covariance Theorem}).
\end{thm}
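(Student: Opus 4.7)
The plan is to recycle the algebraic decomposition already exploited in Lemma~1, namely
\[
\mathbf{S}_T\left[\tilde{\bm{\beta}}_T(f_v) - \bm{\beta}\right] = \left(\mathbf{Z}_T^\top \bm{\Lambda}_T^{-1} \mathbf{Z}_T\right)^{-1} \mathbf{w}_T, \qquad \mathbf{w}_T = \mathbf{Z}_T^\top \bm{\Lambda}_T^{-1} \mathbf{u}_T.
\]
The leading matrix factor converges to the nonsingular deterministic limit $\left(\frac{1}{2\pi}\int_{-\pi}^{\pi} f_v^{-1}(\omega) \, \text{d}\mathbf{H}(\omega)\right)^{-1}$ under [Gren1]--[Gren4] and [Main3] by Lemma~\ref{Lem: Anderson 1}, so by Slutsky's theorem it is enough to prove $\mathbf{w}_T \overset{\mathcal{L}}{\longrightarrow} \text{N}(\mathbf{0}, \mathbf{D})$ for the deterministic limit $\mathbf{D} = \lim_T \mathbf{Z}_T^\top \bm{\Lambda}_T^{-1} \bm{\Sigma}_T \bm{\Lambda}_T^{-1} \mathbf{Z}_T$, whose existence and spectral form are precisely the content of Lemma~1 (equation~(\ref{eq: Limit 3})).

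I would invoke the Cram\'er--Wold device: fix an arbitrary $\bm{\lambda} \in \mathbb{R}^p$ and study the scalar $W_T = \bm{\lambda}^\top \mathbf{w}_T$. Setting $\mathbf{b}_T = \bm{\Lambda}_T^{-1} \mathbf{Z}_T \bm{\lambda} \in \mathbb{R}^T$ and substituting the linear-filter representation $U_t = \sum_{i} \theta_i E_{t-i}$ from [Main1], the change of summation index $s = t-i$ yields
\[
W_T = \sum_{t=1}^T b_{T,t}\, U_t = \sum_{s \in \mathbb{Z}} a_{T,s}\, E_s, \qquad a_{T,s} = \sum_{t=1}^T b_{T,t}\, \theta_{t-s}.
\]
Because $\left\{E_s\right\}_{s\in\mathbb{Z}}$ is independent, mean-zero, and has common variance $\sigma^2$ by [Main1], the statistic $W_T$ is a weighted sum of independent random variables, and its asymptotic normality becomes a Lindeberg--Feller question for triangular arrays.

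The variance side is essentially free, since $\sigma^2 \sum_s a_{T,s}^2 = \bm{\lambda}^\top \mathbf{Z}_T^\top \bm{\Lambda}_T^{-1} \bm{\Sigma}_T \bm{\Lambda}_T^{-1} \mathbf{Z}_T \bm{\lambda} \to \bm{\lambda}^\top \mathbf{D} \bm{\lambda}$ by Lemma~1. With the uniform tail control furnished by [Main2], the standard reduction shows that Lindeberg's condition follows once one has the asymptotic negligibility $\max_s a_{T,s}^2 / \sum_s a_{T,s}^2 \to 0$. Using absolute summability of $\left\{\theta_i\right\}$ from [Main1], one bounds $\max_s |a_{T,s}| \leq \left(\sum_i |\theta_i|\right) \max_t |b_{T,t}|$, and then the row-wise estimate $\max_t |b_{T,t}| \leq \left\Vert \bm{\Lambda}_T^{-1}\right\Vert_\infty \cdot \left\Vert \bm{\lambda}\right\Vert_1 \cdot \max_t \left\Vert \mathbf{z}_t\right\Vert_\infty$ pushes the question onto two separate pieces: the uniform boundedness of $\left\Vert \bm{\Lambda}_T^{-1}\right\Vert_\infty$ in $T$, and the Grenander row negligibility $\max_t \left\Vert \mathbf{z}_t\right\Vert_\infty \to 0$ that [Gren2] is designed to deliver.

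The main obstacle is the Toeplitz inversion bound $\sup_T \left\Vert \bm{\Lambda}_T^{-1}\right\Vert_\infty < \infty$. Assumptions [Main3] and [Main4] are precisely calibrated for this: positivity and continuity of $f_v$ keep $1/f_v$ bounded, while $\sum_{i} i |\eta_i| < \infty$ places the symbol in a Wiener-type algebra whose inverse also has absolutely summable Fourier coefficients, so that the inverses of the truncated Toeplitz matrices $\bm{\Lambda}_T$ have uniformly bounded maximal row sums. The invertibility statement in Lemma~\ref{lem: Gray nonsing} together with a Wiener algebra factorization argument supplies this. Once the norm bound is in hand, the negligibility condition follows immediately, the Lindeberg--Feller theorem gives $W_T \overset{\mathcal{L}}{\longrightarrow} \text{N}\!\left(0, \bm{\lambda}^\top \mathbf{D} \bm{\lambda}\right)$, Cram\'er--Wold lifts this to $\mathbf{w}_T \overset{\mathcal{L}}{\longrightarrow} \text{N}(\mathbf{0}, \mathbf{D})$, and a final application of Slutsky's theorem to the original decomposition delivers (\ref{eq: Asymptotic Normal}) with covariance $\mathbf{C}_v$ as claimed.
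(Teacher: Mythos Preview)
Your proposal is correct and shares the paper's core ingredients: the same algebraic decomposition, Cram\'er--Wold reduction, Lindeberg--Feller verification, reliance on the uniform $\ell_\infty$/$\ell_1$ bound for $\bm{\Lambda}_T^{-1}$ (which the paper supplies as Corollary~\ref{cor: bound the inverse} via the Cholesky--Wiener factorization of Lemmas~\ref{lem: Brillinger} and~\ref{lem: Berk}), and the Grenander row-negligibility $\max_{k}|x_{ik}|/s_{iT}\to 0$. The one genuine structural difference is in how the infinite linear filter is handled. The paper first truncates $\{\theta_i\}$ at level $N$, so each row of the array contains only $T+2N$ independent summands, proves the CLT for the truncated $\mathbf{w}_{T,N}$, shows $\bm{w}_N\overset{\mathcal{L}}{\longrightarrow}\bm{w}$ via $f_{u,N}\to f_u$, and then invokes Billingsley's double-limit Lemma~\ref{lem: billingsley-1} together with a Chebyshev estimate to transfer the conclusion to $\mathbf{w}_T$. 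You instead write $W_T=\sum_{s\in\mathbb{Z}} a_{T,s}E_s$ and apply Lindeberg--Feller directly to the full series, which is shorter and dispenses with Lemma~\ref{lem: billingsley-1} altogether; the paper's route, in exchange, keeps every CLT application squarely within the textbook finite-row setting. Two small points to tighten: the standard Lindeberg--Feller statement is for finitely many summands per row, so you should either cite a countable-row version or insert the one-line tail-truncation-plus-Chebyshev reduction; and the negligibility $\max_t\|\mathbf{z}_t\|_\infty\to 0$ actually needs {[}Gren1{]} in addition to {[}Gren2{]} (via \citealp[Lem.~2.6.1]{Anderson1971}, exactly as the paper invokes), not {[}Gren2{]} alone.
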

\begin{proof}
It suffices to show that $\mathbf{w}_{T}$ is asymptotically normal
with mean $\mathbf{0}$ and covariance matrix equal to the LHS of
(\ref{eq: Limit 3}). First, write $\mathbf{w}_{T,N}=\mathbf{Z}_{T}^{-1}\bm{\Lambda}_{T}^{-1}\bm{\upsilon}_{T,N}$,
where
\[
\bm{\upsilon}_{T,N}^{\top}=\left(\sum_{i=-N}^{N}\theta_{i}E_{1-i},\sum_{i=-N}^{N}\theta_{i}E_{2-i},\dots,\sum_{i=-N}^{N}\theta_{i}E_{T-i-1},\sum_{i=-N}^{N}\theta_{i}E_{T-i}\right)\text{,}
\]
and $N=N\left(T\right)$ is a positive and increasing integer function
of $T$, such that $\lim_{T\rightarrow\infty}N\left(T\right)=\infty$.
Let $\bm{\upsilon}_{T,N}=\bm{\Upsilon}_{T,N}\mathbf{e}_{T,N}$, where

\[
\bm{\Upsilon}_{T,N}=\left[\begin{array}{cccccccccccccc}
0 & 0 & 0 & \cdots & 0 & \theta_{-N} & \theta_{-N+1} & \theta_{-N+2} & \cdots & \theta_{0} & \cdots & \theta_{N-2} & \theta_{N-1} & \theta_{N}\\
0 & 0 & \cdots & 0 & \theta_{-N} & \theta_{-N+1} & \theta_{-N+2} & \cdots & \theta_{0} & \cdots & \theta_{N-2} & \theta_{N-1} & \theta_{N} & 0\\
\vdots &  & \iddots & \iddots &  &  &  & \iddots &  &  &  & \iddots & \iddots & \vdots\\
0 & 0 & \theta_{-N} & \cdots & \theta_{-2} & \theta_{-1} & \theta_{0} & \theta_{1} & \theta_{2} & \cdots & \theta_{N} & 0 & \cdots & 0\\
0 & \theta_{-N} & \cdots & \theta_{-2} & \theta_{-1} & \theta_{0} & \theta_{1} & \theta_{2} & \cdots & \theta_{N} & 0 & \cdots & 0 & 0\\
\theta_{-N} & \cdots & \theta_{-2} & \theta_{-1} & \theta_{0} & \theta_{1} & \theta_{2} & \cdots & \theta_{N} & 0 & \cdots & 0 & 0 & 0
\end{array}\right]
\]
is a $T\times\left(T+2N-1\right)$ matrix and 
\[
\mathbf{e}_{T,N}^{\top}=\left(E_{1-N},E_{2-N},\dots,E_{T+N-1},E_{T+N}\right)
\]
is a $\left(T+2N-1\right)\times1$ vector.

To apply Lemma \ref{lem: billingsley-1}, we must show that for each
$N$, $\mathbf{w}_{T,N}$ converges in law to some $\bm{w}_{N}$,
as $T\rightarrow\infty$, where $\bm{\omega}_{N}$ is asymptotically
normal with mean $\mathbf{0}$ and covariance matrix (\ref{eq: Limit 3}),
as $N\rightarrow\infty$. Then, we must verify that 
\begin{equation}
\lim_{N\rightarrow\infty}\underset{T\rightarrow\infty}{\lim\sup}\text{ }\mathbb{P}\left(\left\Vert \mathbf{w}_{T,N}-\mathbf{w}_{T}\right\Vert _{2}\ge\varepsilon\right)=0\text{,}\label{eq: Probability bound}
\end{equation}
for each $\varepsilon>0$.

For the purpose of applying the Cramer-Wold device, define $\bm{\alpha}^{\top}=\left(\alpha_{1},\dots,\alpha_{d}\right)$,
where $\left\Vert \bm{\alpha}\right\Vert _{2}=1$. Let $\bm{\nu}_{k}$
denote the $k\text{th}$ column of $\text{\ensuremath{\mathbf{X}_{T}^{\top}\bm{\Lambda}_{T}^{-1}\bm{\Upsilon}_{T,N}}}$,
for $k\in\left\{ 1-N,\dots,T+N\right\} $. That is, 
\[
\mathbf{X}_{T}^{\top}\bm{\Lambda}_{T}^{-1}\bm{\Upsilon}_{T,N}=\left[\begin{array}{ccc}
\bm{\nu}_{1-N} & \cdots & \bm{\nu}_{T+N}\end{array}\right]\text{.}
\]
Therefore,

\begin{equation}
\bm{\alpha}^{\top}\mathbf{w}_{T,N}=\bm{\alpha}^{\top}\mathbf{S}_{T}^{-1}\sum_{t=1-N}^{T+N}\bm{\nu}_{t}E_{t}=\sigma\left[\sum_{t=1-N}^{T+N}\left(\bm{\alpha}^{\top}\mathbf{S}_{T}^{-1}\bm{\nu}_{t}\right)^{2}\right]^{1/2}\sum_{t=1-N}^{T+N}W_{t}\text{,}\label{eq: W to E}
\end{equation}
where 

\[
W_{t}=\frac{\bm{\alpha}^{\top}\mathbf{S}_{T}^{-1}\bm{\nu}_{t}}{\sigma\left[\sum_{k=1-N}^{T+N}\left(\bm{\alpha}^{\top}\mathbf{S}_{T}^{-1}\bm{\nu}_{k}\right)^{2}\right]^{1/2}}E_{t}\text{.}
\]

By {[}Main1{]} $\left\Vert \bm{\Upsilon}_{T,N}\right\Vert _{\text{op}}$
is bounded, and by {[}Gren4{]}, $\left\Vert \mathbf{X}_{T}\mathbf{S}^{-1}\right\Vert _{\text{op}}$
is bounded. Further, by {[}Main3{]} and {[}Main4{]}, we have the boundedness
of $\left\Vert \bm{\Lambda}_{T}^{-1}\right\Vert _{\text{op}}$. Thus,
we obtain the inequalities

\begin{equation}
0<\sum_{t=1-N}^{T+N}\left(\bm{\alpha}^{\top}\mathbf{S}_{T}^{-1}\bm{\nu}_{t}\right)^{2}\le\left(\left\Vert \mathbf{X}_{T}\mathbf{S}_{T}^{-1}\right\Vert _{\text{op}}\left\Vert \bm{\Lambda}_{T}^{-1}\right\Vert _{\text{op}}\left\Vert \bm{\Upsilon}_{T,N}\right\Vert _{\text{op}}\right)^{2}<\infty\text{.}\label{eq: bounded1}
\end{equation}
The last fact follows from an application of Lemma \ref{lem: Gray nonsing}
and all of the bounds are independent of $T$ and $N$.

Observe that $\left\{ W_{t}\right\} _{t=1-N}^{T+N}$ is a sequence
of independent random variables with expectation $\text{\ensuremath{\mathbb{E}}}\left(W_{t}\right)=0$
and $\sum_{t=1-N}^{T+N}\text{var}\left(W_{t}\right)=1$. Let $F_{W_{t}}\left(w\right)$
be the distribution function of $W_{t}$, for each $t$. Then, for
any $\delta>0$, we have the bound:

\begin{align}
\sum_{t=1-N}^{T+N}\int_{\left|w\right|>\delta}w^{2}\text{d}F_{W_{t}}\left(w\right) & =\frac{1}{\sigma^{2}}\sum_{t=1-N}^{T+N}\frac{\left(\bm{\alpha}^{\top}\mathbf{S}_{T}^{-1}\bm{\nu}_{t}\right)^{2}}{\sum_{k=1-N}^{T+N}\left(\bm{\alpha}^{\top}\mathbf{S}_{T}^{-1}\bm{\nu}_{k}\right)^{2}}\int_{\left|e\right|>\delta e_{T,N}^{t}}e^{2}\text{d}F_{E_{t}}\left(e\right)\nonumber \\
 & \le\frac{1}{\sigma^{2}}\underset{t\in\left\{ 1-N,\dots,T+N\right\} }{\sup}\int_{\left|e\right|>\delta e_{T,N}^{*}}e^{2}\text{d}F_{E_{t}}\left(e\right)\text{,}\label{eq: bounded2}
\end{align}
where
\[
e_{T,N}^{t}=\frac{\sigma\left[\sum_{k=1-N}^{T+N}\left(\bm{\alpha}^{\top}\mathbf{S}_{T}^{-1}\bm{\nu}_{k}\right)^{2}\right]^{1/2}}{\left|\bm{\alpha}^{\top}\mathbf{S}_{T}^{-1}\bm{\nu}_{t}\right|}\text{, and }e_{T,N}^{*}=\frac{\sigma\left[\sum_{t=1-N}^{T+N}\left(\bm{\alpha}^{\top}\mathbf{S}_{T}^{-1}\bm{\nu}_{t}\right)^{2}\right]^{1/2}}{\underset{t\in\left\{ 1-N,\dots,T+N\right\} }{\sup}\left|\bm{\alpha}^{\top}\mathbf{S}_{T}^{-1}\bm{\nu}_{t}\right|}\text{.}
\]

By the bound in (\ref{eq: bounded1}) and {[}Main2{]}, we must show
that
\begin{equation}
\underset{t\in\left\{ 1-N,\dots,T+N\right\} }{\sup}\left|\bm{\alpha}^{\top}\mathbf{S}_{T}^{-1}\bm{\nu}_{t}\right|\rightarrow0\text{,}\label{eq: sup abs}
\end{equation}
to prove that (\ref{eq: bounded2}) converges to zero, as $T\rightarrow\infty$.
Write the $i\text{th}$ row and $j\text{th}$ column element of $\bm{\Upsilon}_{T,N}$
and $\bm{\Lambda}_{T}^{-1}$ as $\Upsilon_{ij}$ and $\lambda_{ij}=\lambda_{\left|i-j\right|}$,
respectively. Upon expansion we can obtain the following inequalities
for the LHS of (\ref{eq: sup abs}):

\begin{align*}
\underset{t\in\left\{ 1-N,\dots,T+N\right\} }{\sup}\left|\bm{\alpha}^{\top}\mathbf{S}_{T}^{-1}\bm{\nu}_{t}\right| & \le\frac{1}{\sigma^{2}}\sup_{t}\left|\sum_{i=1}^{d}\frac{\alpha_{i}}{s_{iT}}\sum_{j=1}^{T}\Upsilon_{jt}\sum_{k=1}^{T}\lambda_{\left|k-j\right|}x_{ik}\right|\\
 & \le C\max_{i\in\left[d\right]}\frac{\max_{k\in\left[T\right]}\left|x_{ik}\right|}{s_{iT}}\sup_{t}\sum_{j=1}^{T}\left|\Upsilon_{jt}\right|\sum_{k=0}^{\infty}\left|\lambda_{k}\right|\text{,}
\end{align*}
where $C>0$ is some finite constant.

By {[}Main1{]}, $\left\Vert \bm{\Upsilon}_{T,N}\right\Vert _{1}$
and $\left\Vert \bm{\Upsilon}_{T,N}\right\Vert _{\infty}$ are bounded,
independently of $T$ and $N$.Therefore, for any $T$ and $N$, $\sup_{t}\sum_{j=1}^{T}\left|\Upsilon_{jt}\right|<\infty$.
Similarly, by Assumptions {[}Main3{]} and {[}Main4{]}, we apply Corollary
\ref{cor: bound the inverse} to show that $\left\Vert \bm{\Lambda}_{T}^{-1}\right\Vert _{1},\left\Vert \bm{\Lambda}_{T}^{-1}\right\Vert _{\infty}<\infty$,
independently of $T$, and thus $\sum_{k=0}^{\infty}\left|\lambda_{k}\right|<\infty$.
Lastly,

\[
\lim_{T\rightarrow\infty}\max_{i\in\left[d\right]}\frac{\max_{k\in\left[T\right]}\left|x_{ik}\right|}{s_{iT}}=0\text{,}
\]
by {[}Gren1{]}, {[}Gren2{]}, and \citet[Lem. 2.6.1]{Anderson1971}.
Thus (\ref{eq: sup abs}) is proved.

Next, (\ref{eq: sup abs}) is sufficient to guarantee that (\ref{eq: bounded2})
approaches zero as $T$ approaches infinity. We can apply the Lindeberg-Feller
central limit theorem \citep[Thm. 5.1]{DasGupta2008} to obtain $\sum_{t=1-N}^{T+N}W_{t}\overset{\mathcal{L}}{\longrightarrow}\text{N}\left(0,1\right)$.

Via (\ref{eq: W to E}), $\bm{\alpha}^{\top}\bm{w}_{T,N}$ is asymptotically
equal in distribution to $\bm{\alpha}^{\top}\bm{\omega}_{N}$ (as
$T\rightarrow\infty$), where, for any choice of $\bm{\alpha}$, $\bm{\alpha}^{\top}\bm{w}_{N}$
is normal with mean zero and variance
\[
\lim_{T\rightarrow\infty}\bm{\alpha}^{\top}\mathbb{E}\left(\mathbf{w}_{T,N}\mathbf{w}_{T,N}^{\top}\right)\bm{\alpha}\text{.}
\]

Via the Cramer-Wold device (cf. \citealp[Th. 1.16]{DasGupta2008}),
$\bm{w}_{T,N}$ is asymptotically normal with mean vector $\mathbf{0}$
and covariance
\begin{align}
\mathbb{E}\left(\bm{w}_{N}\bm{w}_{N}^{\top}\right) & =\lim_{T\rightarrow\infty}\mathbb{E}\left(\mathbf{w}_{T,N}\mathbf{w}_{T,N}^{\top}\right)\nonumber \\
 & =\lim_{T\rightarrow\infty}\mathbf{Z}_{T}^{\top}\bm{\Lambda}_{T}^{-1}\bm{\Sigma}_{T,N}\bm{\Lambda}_{T}^{-1}\mathbf{Z}_{T}\nonumber \\
 & =\frac{1}{2\pi}\int_{-\pi}^{\pi}\frac{f_{u,N}\left(\omega\right)}{f_{v}^{2}\left(\omega\right)}\text{d}\mathbf{H}\left(\omega\right)\text{,}\label{eq: W_TN to W_N}
\end{align}
using Lemma \ref{Lem: Ibragimov-1}, where $f_{u,N}$ is the SDF corresponding
to $\bm{\Sigma}_{T,N}=\mathbb{E}\left(\bm{\upsilon}_{T,N}\bm{\upsilon}_{T,N}^{\top}\right)$.
In other words, $\bm{w}_{N}$ is normally distributed with zero mean
vector and covariance matrix (\ref{eq: W_TN to W_N}).

By {[}Main1{]}, $f_{u}\left(\omega\right)=\left(\sigma^{2}/2\pi\right)\left|\sum_{i=-\infty}^{\infty}\theta_{i}e^{-\iota i\omega}\right|^{2}$
and $f_{u,N}\left(\omega\right)=\left(\sigma^{2}/2\pi\right)\left|\sum_{i=-N}^{N}\theta_{i}e^{-\iota i\omega}\right|^{2}$,
via the power transfer formula (cf. Lemma \ref{lem fu rudin}). Furthermore,
since $\sum_{i=-\infty}^{\infty}\left|\theta_{i}\right|<\infty$ and
by preservation of uniform convergence under continuous composition
\citep{Bartle1961}, $f_{u,N}$ converges uniformly to $f_{u}$, as
$N$ approaches infinity (cf. \citealp[Sec. 4.1]{Gray2006}). Via
the Cramer-Wold device, $\bm{w}_{N}$ converges in law to $\bm{w}$
($N\rightarrow\infty$), where $\bm{\alpha}^{\top}\bm{w}$ has mean
vector $\mathbf{0}$ and covariance matrix
\begin{align}
\mathbb{E}\left(\bm{w}\bm{w}^{\top}\right) & =\lim_{T\rightarrow\infty}\mathbf{Z}_{T}^{\top}\bm{\Lambda}_{T}^{-1}\bm{\Sigma}_{N}\bm{\Lambda}_{T}^{-1}\mathbf{Z}_{T}\nonumber \\
 & =\frac{1}{2\pi}\int_{-\pi}^{\pi}\frac{f_{u}\left(\omega\right)}{f_{v}^{2}\left(\omega\right)}\text{d}\mathbf{H}\left(\omega\right)\text{,}\label{eq: W_N to W}
\end{align}
which is equal to (\ref{eq: Limit 3}).

Finally, we must verify (\ref{eq: Probability bound}). We use Chebyshev's
inequality, which states that for any $\varepsilon>0$,
\begin{align}
\mathbb{P}\left(\left\Vert \mathbf{w}_{T,N}-\mathbf{w}_{T}\right\Vert _{2}\ge\varepsilon\right) & \le\frac{\mathbb{E}\left(\left\Vert \mathbf{w}_{T,N}-\mathbf{w}_{T}\right\Vert _{2}^{2}\right)}{\varepsilon^{2}}\text{,}\label{eq: cheby}
\end{align}
where we write the numerator of the right-hand side of (\ref{eq: cheby})
as
\begin{align*}
 & \text{tr}\left[\left(\mathbf{Z}_{T}^{-1}\bm{\Lambda}_{T}^{-1}\bm{\upsilon}_{T,N}-\mathbf{Z}_{T}^{-1}\bm{\Lambda}_{T}^{-1}\mathbf{u}_{T}\right)\left(\mathbf{Z}_{T}^{-1}\bm{\Lambda}_{T}^{-1}\bm{\upsilon}_{T,N}-\mathbf{Z}_{T}^{-1}\bm{\Lambda}_{T}^{-1}\mathbf{u}_{T}\right)^{\top}\right]\\
= & \text{tr}\left\{ \mathbf{Z}_{T}^{-1}\bm{\Lambda}_{T}^{-1}\mathbb{E}\left[\left(\bm{\upsilon}_{T,N}-\mathbf{u}_{T}\right)\left(\bm{\upsilon}_{T,N}-\mathbf{u}_{T}\right)^{\top}\right]\bm{\Lambda}_{T}^{-1}\mathbf{Z}_{T}^{-1}\right\} \text{,}
\end{align*}
which reduces to
\[
\text{tr}\left\{ \mathbf{Z}_{T}^{-1}\bm{\Lambda}_{T}^{-1}\mathbb{E}\left[\mathbf{u}_{T}\mathbf{u}_{T}^{\top}\right]\bm{\Lambda}_{T}^{-1}\mathbf{Z}_{T}^{-1}\right\} -\text{tr}\left\{ \mathbf{Z}_{T}^{-1}\bm{\Lambda}_{T}^{-1}\mathbb{E}\left[\bm{\upsilon}_{T,N}\bm{\upsilon}_{T,N}^{\top}\right]\bm{\Lambda}_{T}^{-1}\mathbf{Z}_{T}^{-1}\right\} \text{.}
\]

By (\ref{eq: Limit 3}), (\ref{eq: W_TN to W_N}), and (\ref{eq: W_N to W}),
we have
\[
\lim_{N\rightarrow\infty}\lim_{T\rightarrow\infty}\text{tr}\left\{ \mathbf{Z}_{T}^{-1}\bm{\Lambda}_{T}^{-1}\mathbb{E}\left[\mathbf{u}_{T}\mathbf{u}_{T}^{\top}\right]\bm{\Lambda}_{T}^{-1}\mathbf{Z}_{T}^{-1}\right\} -\text{tr}\left\{ \mathbf{Z}_{T}^{-1}\bm{\Lambda}_{T}^{-1}\mathbb{E}\left[\bm{\upsilon}_{T,N}\bm{\upsilon}_{T,N}^{\top}\right]\bm{\Lambda}_{T}^{-1}\mathbf{Z}_{T}^{-1}\right\} =0\text{.}
\]
Thus, by (\ref{eq: cheby}), condition (\ref{eq: Probability bound})
is verified. This completes the proof.
\end{proof}

\section{Discussions and remarks}

\subsection{Notes regarding the assumptions of Theorem \ref{Thm: Main result}}

We can directly compare {[}Main1{]} to {[}Hann1{]}. It is notable
that {[}Hann1{]} is more general than {[}Main1{]} since it allows
$\left\{ U_{t}\right\} _{t=-\infty}^{\infty}$ to be a linear filter
over a martingale sequence $\left\{ E_{t}\right\} _{t=-\infty}^{\infty}$,
satisfying $\mathbb{E}\left(E_{t}|\mathcal{F}_{t-1}\right)=\mathbb{E}\left(E_{t}^{2}-\mathbb{E}\left(E_{t}^{2}\right)|\mathcal{F}_{t-1}\right)=0$.
Further, the condition $0<\left|\sum_{i=-\infty}^{\infty}\theta_{i}e^{-\iota i\omega}\right|$
is necessitated so that Lemma \ref{Lem: Ibragimov-1} can be applied.
It is remarked in \citet{Amemiya1973}, however, that {[}Main1{]}
and {[}Main2{]} are more general than {[}Amem1{]}.

The addition {[}Main4{]} is the key that facilitates the proof. This
assumption is necessary for bounding $\left\Vert \bm{\Lambda}_{T}^{-1}\right\Vert _{1}$
and $\left\Vert \bm{\Lambda}_{T}^{-1}\right\Vert _{\infty}$, which
is required to prove (\ref{eq: Asymptotic Normal}). It must be remarked
that {[}Main4{]} is a common condition in the literature, and has
been made in similar proof methods, such as those of \citet{Cheng2015}.
The assumption is not restrictive, since a broad class of short-memory
processes satisfy {[}Main4{]}. For example, any stationary autoregressive
moving average (ARMA) process will satisfy {[}Main4{]} (cf. \citealp[Sec. 2.5]{Fan2003}).
The assumption is also commonly used in the analysis of unit root
processes (see, e.g., \citealp[17.5]{Hamilton1994}).

\subsection{Feasible generalized least squares}

Generally the SDF $f_{v}$ is unknown and must be estimated from data.
Suppose that $\hat{f}_{v,T}$ is an estimator of $f_{v}$, which is
indexed by the sample size $T$. Denote the FGLS estimator of $\bm{\beta}$
by $\tilde{\bm{\beta}}_{T}\left(\hat{f}_{v,T}\right)$. For the FGLS
to be of use, we require that the FGLS has the same asymptotic distribution
as $\tilde{\bm{\beta}}_{T}\left(f_{v}\right)$. To this end, it is
sufficient to show that
\begin{equation}
\mathbf{S}_{T}\tilde{\bm{\beta}}_{T}\left(\hat{f}_{v,T}\right)\overset{\mathbb{P}}{\longrightarrow}\mathbf{S}_{T}\tilde{\bm{\beta}}_{T}\left(f_{v}\right)\text{,}\label{eq: convergence in prob}
\end{equation}
where $\overset{\mathbb{P}}{\longrightarrow}$ denotes convergence
in probability. Denote the $T\times T$ auto-covariance matrix corresponding
to the estimator $\hat{f}_{v,T}$ as $\hat{\bm{\Lambda}}_{T}$. Then,
we may write $\tilde{\bm{\beta}}_{T}\left(\hat{\bm{\Lambda}}_{T}\right)=\tilde{\bm{\beta}}_{T}\left(\hat{f}_{v,T}\right)$.

Under {[}Gren1{]}--{[}Gren4{]} and {[}Amem1{]}, \citet[Thm. 2]{Amemiya1973}
proved that if $f_{v}$ is the SDP of an AR process of order $N\in\mathbb{N}$,
that satisfies {[}Amem2{]}, then (\ref{eq: convergence in prob})
holds, when $\hat{\bm{\Lambda}}_{T}$ is obtained via the OLS estimator
for the AR model coefficients (cf. \citealp[Sec. 5.4]{Amemiya1985}).
The argument from \citet[Thm. 2]{Amemiya1973} would hold whenever
$\hat{\bm{\Lambda}}_{T}$ is obtained via any consistent estimator
of the AR model coefficients.

The proof of the theorem also remains the same upon replacing {[}Amem1{]}
by {[}Main1{]} and {[}Main2{]} and noting that {[}Amem2{]} is implied
by {[}Main3{]} and {[}Main4{]}. Thus, under the hypothesis of Theorem
\ref{Thm: Main result}, if $f_{v}$ is hypothesized to be the SDF
of a stationary AR process of order $N\in\mathbb{N}$ (i.e., satisfying
{[}Amem2{]}), then (\ref{eq: convergence in prob}) holds, where $\hat{\bm{\Lambda}}_{T}$
is obtained via any consistent estimator of the AR coefficients.

It is notable that proving that (\ref{eq: convergence in prob}) holds,
under {[}Amem2{]} is permissive due to the fact that the inverse auto-covariance
matrix $\bm{\Lambda}_{T}^{-1}$ has a banded Toeplitz form (cf. \citealp{Verbyla:1985aa}).
We conjecture that it is possible to obtain similar results using
the same techniques as those from \citet{Amemiya1973}, when $f_{v}$
is any parametric family of SDFs with banded Toeplitz inverse auto-covariance
matrices $\bm{\Lambda}_{T}^{-1}$. However, the proof of such a result
is beyond the scope of the current paper.

\subsection{Further comments regarding the frequency-domain IGLS estimator}

We note that \citet{Hannan1973} proved a more general result than
that which we reported in Section \ref{sec:Introduction}. Consider
the following conditions.
\begin{lyxlist}{00.00.0000}
\item [{{[}Hann4{]}}] The sequence $\left\{ U_{t}\right\} _{t=-\infty}^{\infty}$
is stationary and $\alpha\text{-mixing}$ with $\sum_{t=1}^{\infty}\alpha_{t}^{2/\left(2+\delta\right)}<\infty$,
where $\alpha_{t}$ is the $t\text{th}$ $\alpha\text{-mixing}$ number
and $\mathbb{E}\left|U_{t}\right|^{2+\delta}<\infty$ for some $\delta>0$.
\item [{{[}Hann5{]}}] The sequence $\left\{ \mathbf{x}_{t}\right\} _{t=1}^{\infty}$
is a finite segment of $\left\{ \mathbf{x}_{t}\right\} _{t=-\infty}^{\infty}$,
where $\left\{ \mathbf{x}_{t}\right\} _{t=-\infty}^{\infty}$, is
strictly stationary, ergodic, and independent of $\left\{ U_{t}\right\} _{t=-\infty}^{\infty}$.
\end{lyxlist}
From \citet{Hannan1973}, it was proved that (\ref{eq: AN FD}) could
be obtained by assuming either {[}Gren1{]}--{[}Gren4{]} or {[}Hann5{]},
and either {[}Hann4{]}, or {[}Hann1{]} and {[}Hann2{]}, together with
{[}Hann3{]}. In the case where {[}Hann5{]} is assumed, the matrix
$\mathbf{R}\left(k\right)$ in {[}Gren4{]} is replaced by $\bar{\mathbf{R}}\left(k\right)$,
with elements $\bar{\rho}_{ij}\left(k\right)=\mathbb{E}\left(X_{it}X_{j,t+k}\right)/\left[\mathbb{E}\left(X_{it}^{2}\right)\mathbb{E}\left(X_{j,t}^{2}\right)\right]^{1/2}$,
where $\mathbf{x}_{t}^{\top}=\left(X_{1t},\dots X_{pt}\right)$ is
now stochastic.

Assumptions {[}Hann4{]} and {[}Hann5{]} are similar to the mixing
and stochastic assumptions considered in the proofs of \citet{Kapetanios2016}.
A proof of the main result under these conditions could be thus adapted
from the techniques of \citet{Kapetanios2016}. We believe that this
is an interesting direction of research. However, it falls outside
the aim of our paper, which was to directly improve upon the IGLS
results obtained in \citet{Amemiya1973}.

\section*{Appendix}

The following results are required in our main proofs. Sources for
all unproved results are provided at the end of the section.
\begin{lem}
\label{Lem: Anderson 1} Let $\mathbf{v}_{T}^{\top}=\left(V_{1},\dots,V_{T}\right)$
have covariance matrix $\mathbb{E}\left(\mathbf{v}_{T}\mathbf{v}_{T}^{\top}\right)=\bm{\Lambda}_{T}$,
where $\left\{ V_{t}\right\} _{t=1}^{T}$ is a finite segment of the
random sequence $\left\{ V_{t}\right\} _{t=-\infty}^{\infty}$ with
SDF $f_{v}$. Under {[}Gren1{]}--{[}Gren4{]}, and {[}Main3{]},
\[
\lim_{T\rightarrow\infty}\mathbf{Z}_{T}^{\top}\bm{\Lambda}_{T}^{-1}\mathbf{Z}_{T}=\frac{1}{2\pi}\int_{-\pi}^{\pi}\frac{1}{f_{v}\left(\omega\right)}\text{d}\mathbf{H}\left(\omega\right)\text{.}
\]
\end{lem}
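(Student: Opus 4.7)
The plan is to reduce the expression to a quadratic form against a banded Toeplitz matrix with trigonometric-polynomial symbol---for which [Gren3] applies termwise---and to control the residual via the uniform spectral gap of $\bm{\Lambda}_T$. Concretely, under [Main3] the SDF satisfies $m := \inf_\omega f_v(\omega) > 0$ and $\|1/f_v\|_\infty < \infty$. Since $\bm{\Lambda}_T$ is the $T\times T$ Toeplitz matrix whose generating symbol is $2\pi f_v$, the Rayleigh quotient bound gives $\lambda_{\min}(\bm{\Lambda}_T) \ge 2\pi m$, so $\|\bm{\Lambda}_T^{-1}\|_{\mathrm{op}} \le (2\pi m)^{-1}$ uniformly in $T$; and [Gren1]--[Gren4] yield $\|\mathbf{Z}_T\|_{\mathrm{op}} = O(1)$.

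Fix $\varepsilon > 0$ and, by Fej\'er's theorem, pick a real, even trigonometric polynomial $p_\varepsilon(\omega) = \sum_{|k| \le K} c_k e^{-ik\omega}$ with $\|1/f_v - p_\varepsilon\|_\infty < \varepsilon$. Let $P_T^\varepsilon$ be the $T\times T$ banded Toeplitz matrix with $(P_T^\varepsilon)_{ts} = c_{t-s}$ when $|t-s|\le K$ and zero otherwise. Because $K$ is fixed, the finitely many lag sums $(s_{iT}s_{jT})^{-1}\sum_t x_{it}x_{j,t+k}$ converge termwise by [Gren3], and a direct computation yields
\[
\lim_{T\to\infty}\mathbf{Z}_T^\top P_T^\varepsilon \mathbf{Z}_T \;=\; \sum_{|k|\le K} c_k\,\mathbf{R}(k) \;=\; \int_{-\pi}^{\pi} p_\varepsilon(\omega)\,d\mathbf{H}(\omega),
\]
using $\mathbf{R}(k)=\int e^{ik\omega}\,d\mathbf{H}(\omega)$, the Hermiticity of $\mathbf{H}$, and the evenness of $p_\varepsilon$. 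This limit lies within $O(\varepsilon)$ of $\int(1/f_v)\,d\mathbf{H}$.

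With the decomposition $\mathbf{Z}_T^\top \bm{\Lambda}_T^{-1} \mathbf{Z}_T = (2\pi)^{-1}\mathbf{Z}_T^\top P_T^\varepsilon \mathbf{Z}_T + \mathbf{Z}_T^\top[\bm{\Lambda}_T^{-1}-(2\pi)^{-1}P_T^\varepsilon]\mathbf{Z}_T$, the first summand converges as above, and the main obstacle is to show that the error term is $O(\varepsilon)$ uniformly in $T$. I would attack this via the identity $2\pi \bm{\Lambda}_T^{-1} - P_T^\varepsilon = \bm{\Lambda}_T^{-1}[2\pi I - \bm{\Lambda}_T P_T^\varepsilon]$ and an analysis of $\bm{\Lambda}_T P_T^\varepsilon$. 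Away from $O(K)$ boundary rows and columns, the entries of this product are the Fourier coefficients of the symbol $(2\pi f_v)\cdot p_\varepsilon$, which is uniformly within $2\pi\|f_v\|_\infty\varepsilon$ of the constant $2\pi$; the boundary correction is a matrix supported on $O(K)$ rows and columns whose contribution in the $\mathbf{Z}_T^\top(\cdot)\mathbf{Z}_T$ sandwich vanishes by [Gren1]--[Gren2], since the individual row weight of $\mathbf{Z}_T$ tends to zero as $T\to\infty$. Combining these estimates with the uniform bound on $\|\bm{\Lambda}_T^{-1}\|_{\mathrm{op}}$ yields an $O(\varepsilon)$ residual, and letting $\varepsilon \to 0$ completes the proof. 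The careful bookkeeping of the edge corrections in $\bm{\Lambda}_T P_T^\varepsilon$ is the genuinely nontrivial part; every other step is either a standard Toeplitz/Fej\'er fact or a direct application of the Grenander conditions.
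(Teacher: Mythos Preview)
The paper does not actually prove this lemma: it is stated in the Appendix without argument and attributed to \citet[Thm.~10.2.7]{Anderson1971}. So there is no in-paper proof to compare against; your proposal is a self-contained argument where the paper simply imports a classical result.

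That said, your route is correct and is essentially the standard Grenander--Szeg\H{o}/Anderson strategy: approximate $1/f_v$ uniformly by an even trigonometric polynomial $p_\varepsilon$, handle the resulting banded Toeplitz quadratic form termwise via [Gren3] to obtain $\int p_\varepsilon\,d\mathbf{H}$, and squeeze the remainder using the uniform bound $\|\bm{\Lambda}_T^{-1}\|_{\mathrm{op}}\le (2\pi\inf f_v)^{-1}$ coming from [Main3]. The only step you flag as delicate---the edge correction in $2\pi\mathbf{I}-\bm{\Lambda}_T P_T^\varepsilon$---does go through cleanly once you observe two things. First, with your factorization the non-Toeplitz part of $\bm{\Lambda}_T P_T^\varepsilon$ is supported on the $2K$ boundary \emph{columns} only (for $K+1\le l\le T-K$ the entry depends solely on $j-l$, for every $j$), so in the sandwich it multiplies only the boundary rows of $\mathbf{Z}_T$, and [Gren1]--[Gren2] force $\sum_{l\in\text{bdry}}\sum_i x_{il}^2/s_{iT}^2\to 0$ for fixed $K$. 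Second, the edge block has operator norm bounded independently of $T$ simply because $\|2\pi\mathbf{I}-\bm{\Lambda}_T P_T^\varepsilon\|_{\mathrm{op}}\le 2\pi+\|\bm{\Lambda}_T\|_{\mathrm{op}}\|P_T^\varepsilon\|_{\mathrm{op}}$ and the bulk Toeplitz piece has norm at most $\|2\pi(1-f_vp_\varepsilon)\|_\infty$; no summability of $\eta_i$ (i.e.\ no [Main4]) is needed here, which is consistent with the lemma's hypotheses.
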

\begin{lem}
\label{lem: Gray nonsing}Let $\mathbf{v}_{T}^{\top}=\left(V_{1},\dots,V_{T}\right)$
have covariance matrix $\mathbb{E}\left(\mathbf{v}_{T}\mathbf{v}_{T}^{\top}\right)=\bm{\Lambda}_{T}$,
where $\left\{ V_{t}\right\} _{t=1}^{T}$ is a finite segment of the
random sequence $\left\{ V_{t}\right\} _{t=-\infty}^{\infty}$ with
SDF $f_{v}$. If $f_{v}\left(\omega\right)\ge c>0$ for all $\omega\in\left[-\pi,\pi\right]$,
then $\bm{\Lambda}_{T}$ is nonsingular and $\left\Vert \bm{\Lambda}_{T}^{-1}\right\Vert _{\text{op}}<\infty$.
\end{lem}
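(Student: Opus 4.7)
The plan is to prove this by a direct quadratic-form argument of Grenander--Szegő type, using the spectral representation of the autocovariances $\eta_k$ to translate the pointwise lower bound $f_v(\omega)\ge c>0$ into a uniform lower bound on the smallest eigenvalue of $\bm{\Lambda}_T$.

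First I would invert the Fourier expansion (\ref{eq: fv}) to obtain $\eta_k=\int_{-\pi}^{\pi}f_v(\omega)e^{\iota k\omega}\,\text{d}\omega$, and use this representation inside the entries $\lambda_{jk}=\eta_{j-k}$ of the Toeplitz matrix $\bm{\Lambda}_T$. For any nonzero $\mathbf{z}\in\mathbb{C}^{T}$, I would then compute the Hermitian quadratic form
\[
\mathbf{z}^{*}\bm{\Lambda}_T\mathbf{z}=\sum_{j,k=1}^{T}\bar{z}_j z_k\int_{-\pi}^{\pi}f_v(\omega)e^{\iota(j-k)\omega}\,\text{d}\omega=\int_{-\pi}^{\pi}f_v(\omega)\bigl|P_{\mathbf{z}}(\omega)\bigr|^{2}\text{d}\omega,
\]
where $P_{\mathbf{z}}(\omega)=\sum_{j=1}^{T}z_j e^{-\iota j\omega}$ is the trigonometric polynomial associated with $\mathbf{z}$. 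The interchange of sum and integral is justified because the sum is finite.

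Next I would invoke the pointwise lower bound $f_v(\omega)\ge c$ together with Parseval's identity $\int_{-\pi}^{\pi}|P_{\mathbf{z}}(\omega)|^{2}\text{d}\omega=2\pi\|\mathbf{z}\|_{2}^{2}$ (which follows from $L^{2}$-orthogonality of $\{e^{-\iota j\omega}\}_{j\in\mathbb{Z}}$ on $[-\pi,\pi]$). This yields the Rayleigh-quotient estimate
\[
\mathbf{z}^{*}\bm{\Lambda}_T\mathbf{z}\ge c\int_{-\pi}^{\pi}\bigl|P_{\mathbf{z}}(\omega)\bigr|^{2}\text{d}\omega=2\pi c\,\|\mathbf{z}\|_{2}^{2},
\]
valid for every $\mathbf{z}\neq\mathbf{0}$ and uniformly in $T$. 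Consequently, the smallest eigenvalue of the Hermitian matrix $\bm{\Lambda}_T$ is at least $2\pi c>0$, so $\bm{\Lambda}_T$ is nonsingular, and its inverse satisfies $\|\bm{\Lambda}_T^{-1}\|_{\text{op}}\le(2\pi c)^{-1}<\infty$, with the bound independent of $T$.

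There is no real obstacle here: the argument is essentially a one-step application of Parseval's identity once the Fourier representation of $\eta_k$ is substituted. The only mild subtlety is notational, namely keeping the $2\pi$ normalization consistent with (\ref{eq: fv}); beyond that, the proof is a standard Toeplitz-matrix computation in the spirit of \citet{Gray2006}.
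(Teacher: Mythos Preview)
Your argument is correct: the spectral representation of $\eta_k$, the quadratic-form identity $\mathbf{z}^{*}\bm{\Lambda}_T\mathbf{z}=\int_{-\pi}^{\pi}f_v(\omega)\lvert P_{\mathbf{z}}(\omega)\rvert^{2}\,\text{d}\omega$, and Parseval combine exactly as you describe to give the uniform eigenvalue lower bound $2\pi c$ and hence $\lVert\bm{\Lambda}_T^{-1}\rVert_{\text{op}}\le(2\pi c)^{-1}$. The paper does not actually supply its own proof of this lemma; it merely records that the result ``is implied by Theorem~5.2 in \citet{Gray2006}.'' Your Grenander--Szeg\H{o} quadratic-form computation is precisely the standard argument underlying that theorem, so you have in effect written out what the paper leaves as a citation, and the approaches coincide.
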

\begin{lem}
\label{lem fu rudin}Let the random sequence $\left\{ U_{t}\right\} _{t=-\infty}^{\infty}$
satisfy {[}Main1{]} and have SDF $f_{u}$. Then $f_{u}\left(\omega\right)$
is real, positive, and continuous.
\end{lem}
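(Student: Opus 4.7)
The plan is to apply the power transfer formula to the linear process representation supplied by [Main1] and then read off each of the three properties directly from the assumptions. Since [Main1] expresses $\{U_t\}$ as an absolutely summable two-sided linear filter of an independent (hence uncorrelated) mean-zero sequence with common finite variance $\sigma^2$, the standard power transfer identity (see, e.g., \citealp[Sec.~4.1]{Gray2006} or \citealp{Fan2003}, already invoked in the proof of Theorem~\ref{Thm: Main result}) yields
\[
f_u(\omega) \;=\; \frac{\sigma^2}{2\pi}\left|\sum_{i=-\infty}^{\infty}\theta_i e^{-\iota i \omega}\right|^2.
\]
Writing $\Theta(\omega) = \sum_{i=-\infty}^{\infty}\theta_i e^{-\iota i \omega}$, the claim reduces to showing that $|\Theta(\omega)|^2$ is real, strictly positive, and continuous on $[-\pi,\pi]$.

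First, realness is immediate: the modulus-squared of any complex-valued function is nonnegative real, and the prefactor $\sigma^2/(2\pi)$ is a positive real constant, so $f_u(\omega)\in\mathbb{R}$. Second, positivity is exactly the content of the condition $0 < \left|\sum_{i=-\infty}^{\infty}\theta_i e^{-\iota i \omega}\right|$ imposed in [Main1]; combined with $\sigma^2 > 0$, this gives $f_u(\omega) > 0$ for every $\omega \in [-\pi,\pi]$.

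Third, for continuity, I would invoke the Weierstrass M-test: since $|\theta_i e^{-\iota i \omega}| = |\theta_i|$ and $\sum_{i=-\infty}^\infty |\theta_i| < \infty$ by [Main1], the partial sums $\sum_{|i|\le N}\theta_i e^{-\iota i \omega}$ converge uniformly on $[-\pi,\pi]$ to $\Theta(\omega)$. Each partial sum is continuous as a finite sum of continuous functions, and uniform limits of continuous functions are continuous, so $\Theta$ is continuous. Continuity of $|\Theta|^2$ then follows by composition with the continuous map $z\mapsto |z|^2$, and hence $f_u$ is continuous.

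The only real obstacle, and a minor one, is the justification of the power transfer formula itself, which requires exactly the absolute summability of $\{\theta_i\}$ together with the uncorrelated, equal-variance, mean-zero structure of $\{E_t\}$ — both of which are provided directly by [Main1]. Everything else reduces to a one-line consequence of each assumption, so the lemma is essentially an unpacking of [Main1] through the spectral representation of a linear process.
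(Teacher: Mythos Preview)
Your proposal is correct and follows essentially the same route as the paper: both arguments introduce the transfer function $\Theta(\omega)=\sum_i\theta_i e^{-\iota i\omega}$, invoke the power transfer formula (the paper cites \citealp[Thm.~2.12]{Fan2003}) to obtain $f_u(\omega)=\tfrac{\sigma^2}{2\pi}|\Theta(\omega)|^2$, and then read off realness, positivity, and continuity from the hypotheses in {[}Main1{]}. The only cosmetic difference is that you spell out the Weierstrass $M$-test for the continuity of $\Theta$, whereas the paper simply cites \citet[Sec.~9.4]{Rudin:1987aa} for the same fact.
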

\begin{proof}
Form (\ref{eq: power transfer}) implies that $\left\{ U_{t}\right\} _{t=-\infty}^{\infty}$
is a linear filter of $\left\{ E_{t}\right\} _{t=-\infty}^{\infty}$.
We may write the so-called transfer function of this filtering as
$\Gamma\left(\omega\right)=\sum_{i=-\infty}^{\infty}\theta_{i}e^{-\iota i\omega}$.
Since $\sum_{i=-\infty}^{\infty}\left|\theta_{i}\right|<\infty$,
we have the fact that $\Gamma\left(\omega\right)$ is continuous (cf.
\citealp[Sec. 9.4]{Rudin:1987aa}). By the same fact, $\Gamma\left(\omega\right)$
is also bounded in modulus (i.e. $\left|\Gamma\right|<\infty$). Next,
using Theorem 2.12 of \citet{Fan2003}, we may write
\[
f_{u}\left(\omega\right)=\frac{\sigma^{2}}{2\pi}\left|\Gamma\left(\omega\right)\right|^{2}\text{,}
\]
for each $\omega\in\left[-\pi,\pi\right]$, by the fact that $\left\{ E_{t}\right\} _{t=-\infty}^{\infty}$
is an independent sequence with $\mathbb{E}\left(E_{t}\right)=0$
and $\text{var}\left(E_{t}\right)=\sigma^{2}$. Since the squared
modulus is real and positive, by our assumptions, we have the desired
result.
\end{proof}
\begin{lem}
\label{Lem: Ibragimov-1} Under {[}Gren1{]}--{[}Gren4{]}, if the
sequences $\left\{ U_{t}\right\} _{t=-\infty}^{\infty}$ and $\left\{ V_{t}\right\} _{t=-\infty}^{\infty}$
have positive and continuous SDFs $f_{u}$ and $f_{v}$, respectively,
and if $f_{u}$ is bounded, then
\[
\lim_{T\rightarrow\infty}\mathbf{S}_{T}\text{var}\left[\tilde{\bm{\beta}}_{T}\left(f_{v}\right)\right]\mathbf{S}_{T}=\mathbf{C}_{v}\text{.}
\]
\end{lem}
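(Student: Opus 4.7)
The plan is to reduce the claim to a Grenander--Szegő asymptotic for weighted quadratic forms in Toeplitz matrices. Writing $\tilde{\bm{\beta}}_{T}(f_{v})-\bm{\beta}=(\mathbf{X}_{T}^{\top}\bm{\Lambda}_{T}^{-1}\mathbf{X}_{T})^{-1}\mathbf{X}_{T}^{\top}\bm{\Lambda}_{T}^{-1}\mathbf{u}_{T}$, using $\mathbb{E}(\mathbf{u}_{T}\mathbf{u}_{T}^{\top})=\bm{\Sigma}_{T}$, and inserting $\mathbf{Z}_{T}=\mathbf{X}_{T}\mathbf{S}_{T}^{-1}$, I obtain the sandwich
\begin{equation*}
\mathbf{S}_{T}\,\text{var}[\tilde{\bm{\beta}}_{T}(f_{v})]\,\mathbf{S}_{T}=\left(\mathbf{Z}_{T}^{\top}\bm{\Lambda}_{T}^{-1}\mathbf{Z}_{T}\right)^{-1}\mathbf{Z}_{T}^{\top}\bm{\Lambda}_{T}^{-1}\bm{\Sigma}_{T}\bm{\Lambda}_{T}^{-1}\mathbf{Z}_{T}\left(\mathbf{Z}_{T}^{\top}\bm{\Lambda}_{T}^{-1}\mathbf{Z}_{T}\right)^{-1}.
\end{equation*}
Lemma \ref{Lem: Anderson 1} handles the outer inverses, which converge to $2\pi\left[\int_{-\pi}^{\pi}f_{v}^{-1}\,\text{d}\mathbf{H}\right]^{-1}$. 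The core task is therefore to establish the inner convergence
\begin{equation*}
\mathbf{Z}_{T}^{\top}\bm{\Lambda}_{T}^{-1}\bm{\Sigma}_{T}\bm{\Lambda}_{T}^{-1}\mathbf{Z}_{T}\longrightarrow\frac{1}{2\pi}\int_{-\pi}^{\pi}\frac{f_{u}(\omega)}{f_{v}^{2}(\omega)}\,\text{d}\mathbf{H}(\omega),
\end{equation*}
after which direct substitution delivers the stated form of $\mathbf{C}_{v}$.

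To establish the inner convergence, I would approximate $\bm{\Lambda}_{T}^{-1}$ in operator norm by Toeplitz matrices with trigonometric-polynomial symbols. Since $f_{v}$ is continuous and strictly positive on the compact interval $[-\pi,\pi]$, so is $1/f_{v}$, and the Weierstrass approximation theorem yields trigonometric polynomials $p_{m}$ with $p_{m}\to(2\pi f_{v})^{-1}$ uniformly. Letting $P_{T}^{(m)}$ denote the $T\times T$ Toeplitz matrix with symbol $p_{m}$, the product $P_{T}^{(m)}\bm{\Lambda}_{T}$ has effective symbol converging uniformly to $1$, and a standard Grenander--Szegő estimate yields $\left\Vert P_{T}^{(m)}\bm{\Lambda}_{T}-\mathbf{I}\right\Vert _{\text{op}}\to 0$ as first $T\to\infty$ and then $m\to\infty$. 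Combined with the uniform bound $\left\Vert \bm{\Lambda}_{T}^{-1}\right\Vert _{\text{op}}<\infty$ from Lemma \ref{lem: Gray nonsing}, this forces $\left\Vert \bm{\Lambda}_{T}^{-1}-P_{T}^{(m)}\right\Vert _{\text{op}}$ to be arbitrarily small uniformly in $T$. The product $\bm{\Lambda}_{T}^{-1}\bm{\Sigma}_{T}\bm{\Lambda}_{T}^{-1}$ is then uniformly close to the Toeplitz matrix whose symbol $p_{m}\cdot(2\pi f_{u})\cdot p_{m}$ converges to $f_{u}/(2\pi f_{v}^{2})$, where the propagation of errors across the sandwich uses the boundedness of $f_{u}$ (hence of $\left\Vert \bm{\Sigma}_{T}\right\Vert _{\text{op}}$).

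Finally I would invoke a direct extension of Lemma \ref{Lem: Anderson 1} to arbitrary continuous symbols (a standard Grenander--Szegő result valid under [Gren1]--[Gren4]): for any continuous $g$ on $[-\pi,\pi]$, if $G_{T}$ is the Toeplitz matrix with symbol $g$, then $\mathbf{Z}_{T}^{\top}G_{T}\mathbf{Z}_{T}\to(2\pi)^{-1}\int g\,\text{d}\mathbf{H}$. Applied to the approximating matrices with symbols $p_{m}(2\pi f_{u})p_{m}$ and passing to the limit $m\to\infty$ via uniform convergence of the symbols and uniform operator-norm approximation of $\bm{\Lambda}_{T}^{-1}$, this yields the required inner limit. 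The principal obstacle is the operator-norm approximation $\bm{\Lambda}_{T}^{-1}\approx P_{T}^{(m)}$ uniformly in $T$; it rests crucially on the strict positivity of $f_{v}$ (via Lemma \ref{lem: Gray nonsing}) and on the boundedness of $f_{u}$, without which errors cannot be propagated through the sandwich. Once that approximation is secured, the Grenander spectral measure $\mathbf{H}$ performs the remaining bookkeeping and produces the spectral form of $\mathbf{C}_{v}$.
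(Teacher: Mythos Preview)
First, note that the paper does not actually prove this lemma: it is stated in the Appendix and attributed to Theorem~8 of Chapter~VII in \citet{Ibragimov1978}. So there is no ``paper's proof'' to compare against beyond that citation. Your strategy---approximate $1/f_{v}$ uniformly by trigonometric polynomials, replace $\bm{\Lambda}_{T}^{-1}$ by a banded Toeplitz matrix, and then invoke a Grenander--Szeg\H{o} limit for $\mathbf{Z}_{T}^{\top}(\cdot)\mathbf{Z}_{T}$---is indeed the classical route and is essentially how Ibragimov and Rozanov proceed.

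There is, however, a genuine gap in the operator-norm step. The claim that $\left\Vert P_{T}^{(m)}\bm{\Lambda}_{T}-\mathbf{I}\right\Vert_{\text{op}}\to 0$ (first $T\to\infty$, then $m\to\infty$) is false in general. Products of finite Toeplitz sections are not Toeplitz: $P_{T}^{(m)}\bm{\Lambda}_{T}$ differs from the Toeplitz matrix with symbol $2\pi p_{m}f_{v}$ by a boundary block (supported in the first and last $d_{m}$ rows, where $d_{m}$ is the degree of $p_{m}$) whose operator norm does \emph{not} vanish as $T\to\infty$, and does not vanish as $m\to\infty$ either---it converges to the nonzero edge correction one sees, for example, in the explicit inverse of a tridiagonal Toeplitz matrix. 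Consequently you cannot conclude $\left\Vert \bm{\Lambda}_{T}^{-1}-P_{T}^{(m)}\right\Vert_{\text{op}}\to 0$ uniformly in $T$, and the error propagation through the sandwich, as written, breaks down.

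The repair is to abandon operator-norm control of the matrices themselves and argue directly at the level of the quadratic forms $\mathbf{Z}_{T}^{\top}(\cdot)\mathbf{Z}_{T}$. The boundary correction $P_{T}^{(m)}\bm{\Lambda}_{T}-T_{T}(2\pi p_{m}f_{v})$ is uniformly bounded in operator norm and supported in a band of fixed width near the edges; by {[}Gren1{]}--{[}Gren2{]} (and the standard consequence $\max_{t}|x_{it}|/s_{iT}\to 0$), the columns of $\mathbf{Z}_{T}$ carry vanishing mass in any fixed edge band, so these corrections contribute $o(1)$ to $\mathbf{Z}_{T}^{\top}(\cdot)\mathbf{Z}_{T}$. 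This is precisely where the Grenander conditions enter the argument---not merely in the final spectral-measure identification---and is the mechanism used in the proof of Lemma~\ref{Lem: Anderson 1} itself. Once the edge terms are handled this way, the rest of your outline (uniform approximation of symbols, boundedness of $f_{u}$ to control $\left\Vert\bm{\Sigma}_{T}\right\Vert_{\text{op}}$, and the Grenander--Szeg\H{o} limit for continuous symbols) goes through.
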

\begin{lem}
\label{lem: billingsley-1}For each $N$, suppose that $\bm{X}_{T,N}\overset{\mathcal{L}}{\longrightarrow}\bm{X}_{N}$,
as $T\rightarrow\infty$, and that $\bm{X}_{N}\overset{\mathcal{L}}{\longrightarrow}\bm{X}$,
as $N\rightarrow\infty$. Furthermore, assume that
\[
\lim_{N\rightarrow\infty}\limsup_{T\rightarrow\infty}\mathbb{P}\left(\left\Vert \bm{X}_{T,N}-\bm{Y}_{T}\right\Vert \ge\epsilon\right)=0\text{,}
\]
for each $\epsilon>0$ and some $\left\{ \bm{Y}_{T}\right\} $. If
each of the random variables involved have a common separable domain,
and if $\left\Vert \cdot\right\Vert $ is some appropriate norm on
said domain, then $\bm{Y}_{T}\overset{\mathcal{L}}{\longrightarrow}\bm{X}$,
as $T\rightarrow\infty$.
\end{lem}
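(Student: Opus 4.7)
The plan is to reduce the convergence $\bm{Y}_T\overset{\mathcal{L}}{\longrightarrow}\bm{X}$ to the Portmanteau-type criterion that $\mathbb{E}[f(\bm{Y}_T)]\rightarrow\mathbb{E}[f(\bm{X})]$ for every bounded Lipschitz $f$ on the common separable metric domain, since on a separable metric space this class of test functions determines weak convergence (Dudley's theorem). Fix such an $f$ with Lipschitz constant $L$ and $\sup$-norm $M$, and decompose
\[
\bigl|\mathbb{E}f(\bm{Y}_T)-\mathbb{E}f(\bm{X})\bigr|\le A_{T,N}+B_{T,N}+C_{N},
\]
where $A_{T,N}=|\mathbb{E}f(\bm{Y}_T)-\mathbb{E}f(\bm{X}_{T,N})|$, $B_{T,N}=|\mathbb{E}f(\bm{X}_{T,N})-\mathbb{E}f(\bm{X}_N)|$, and $C_N=|\mathbb{E}f(\bm{X}_N)-\mathbb{E}f(\bm{X})|$. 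The terms $B_{T,N}$ and $C_N$ are handled directly by the two hypothesized weak convergences: $B_{T,N}\to 0$ as $T\to\infty$ for each fixed $N$, and $C_N\to 0$ as $N\to\infty$.

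The crux is controlling $A_{T,N}$ via the approximation hypothesis. For any $\varepsilon>0$, split the expectation over the event $G_{T,N,\varepsilon}=\{\|\bm{X}_{T,N}-\bm{Y}_T\|\ge\varepsilon\}$ and its complement. On $G_{T,N,\varepsilon}^{c}$, the Lipschitz bound gives $|f(\bm{Y}_T)-f(\bm{X}_{T,N})|\le L\varepsilon$, while on $G_{T,N,\varepsilon}$ the trivial bound $|f(\bm{Y}_T)-f(\bm{X}_{T,N})|\le 2M$ suffices, yielding
\[
A_{T,N}\le L\varepsilon+2M\,\mathbb{P}(G_{T,N,\varepsilon}).
\]

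With these three estimates in hand, the conclusion follows by iterating the limits in the prescribed order. Taking $\limsup_{T\to\infty}$ first kills $B_{T,N}$, then taking $\limsup_{N\to\infty}$ eliminates $C_N$ and, by the approximation assumption, the probability term $\limsup_{T\to\infty}\mathbb{P}(G_{T,N,\varepsilon})$. What remains is $\limsup_T|\mathbb{E}f(\bm{Y}_T)-\mathbb{E}f(\bm{X})|\le L\varepsilon$, and finally sending $\varepsilon\downarrow 0$ completes the argument.

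The main obstacle is not computational but structural: the result requires the sequential passage $T\to\infty$, then $N\to\infty$, then $\varepsilon\to 0$, and interchanging any of these orders can fail. The separability assumption on the common domain is essential so that Dudley's bounded-Lipschitz characterization of weak convergence is applicable; without it, approximation of bounded continuous functions by bounded Lipschitz ones is not automatic, and the splitting argument for $A_{T,N}$ would lose its force. Given the hypothesis that $\|\cdot\|$ is an appropriate norm on a separable domain, this machinery is available and the proof proceeds cleanly.
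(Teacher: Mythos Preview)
Your argument is correct and is essentially the standard proof of this result. However, the paper does not supply its own proof of this lemma: it simply records at the end of the Appendix that the statement is Theorem~4.2 of \citet{Billingsley1968}. What you have written is precisely the classical Billingsley argument (bounded Lipschitz test functions via the Portmanteau/Dudley characterization, three-term triangle inequality, truncation on the event $\{\|\bm{X}_{T,N}-\bm{Y}_T\|\ge\varepsilon\}$, then the ordered passage $T\to\infty$, $N\to\infty$, $\varepsilon\downarrow 0$), so there is no methodological difference to discuss---you have reconstructed the proof the paper defers to.
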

\begin{lem}
\label{lem: Brillinger}Let $\left\{ V_{t}\right\} _{t=-\infty}^{\infty}$
be a real-valued, where $\mathbb{E}\left(V_{t}\right)=0$ ($t\in\mathbb{Z}$),
$\eta_{i}=\text{cov}\left(V_{t},V_{t+i}\right)$ ($i\in\mathbb{Z}$;
$\eta_{-i}=\eta_{i}$), and SDF $f_{v}\left(\omega\right)=\left(2\pi\right)^{-1}\sum_{i=-\infty}^{\infty}\eta_{i}e^{-\iota i\omega}$.
If $f_{v}$ is positive and $\sum_{i=-\infty}^{\infty}i\left|\eta_{i}\right|<\infty$,
then, $V_{t}=\sum_{i=0}^{\infty}b_{i}E_{t-i}$, where $E_{t}=\sum_{i=0}^{\infty}a_{i}V_{t-i}$
is a random process, with $\mathbb{E}\left(E_{t}\right)=0$ and $\text{var}\left(E_{t}\right)=\varsigma^{2}>0$
($a_{0}=1$ and $b_{0}=1$). Furthermore, $\sum_{i=0}^{\infty}i\left|a_{i}\right|<\infty$,
$\sum_{i=0}^{\infty}i\left|b_{i}\right|<\infty$, and
\[
f_{v}\left(\omega\right)=\frac{\varsigma^{2}}{2\pi}\left|b\left(\omega\right)\right|^{2}=\frac{\varsigma^{2}}{2\pi}\left|a\left(\omega\right)\right|^{-2}\text{,}
\]
where $a\left(\omega\right)=\sum_{i=0}^{\infty}a_{i}e^{-\iota i\omega}$
and $b\left(\omega\right)=\sum_{i=0}^{\infty}b_{i}e^{-\iota i\omega}$.
\end{lem}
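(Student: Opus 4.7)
The plan is to combine the classical Szegő--Kolmogorov spectral factorization theorem with regularity estimates that propagate the summability hypothesis $\sum_{i}i|\eta_{i}|<\infty$ from $f_{v}$ to its canonical Wold factors $b$ and $a=1/b$. First I would observe that the bound $\sum_{i}i|\eta_{i}|<\infty$ (and hence $\sum_{i}|\eta_{i}|<\infty$) makes the trigonometric series for $f_{v}$ converge absolutely and uniformly, so $f_{v}$ is continuous; combined with positivity on the compact interval $[-\pi,\pi]$, this yields $\inf_{\omega}f_{v}(\omega)=c>0$. Hence $\log f_{v}$ is continuous and has a well-defined Fourier series.

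Second, I would invoke the canonical factorization: setting $\varsigma^{2}=2\pi\exp(c_{0})$, where $c_{0}=(2\pi)^{-1}\int_{-\pi}^{\pi}\log f_{v}(\omega)\,\text{d}\omega$, and writing $\log[2\pi f_{v}(\omega)/\varsigma^{2}]=\sum_{k\in\mathbb{Z}}c_{k}e^{-\iota k\omega}$, define
\[
b(\omega)=\exp\!\left(\sum_{k=1}^{\infty}c_{k}e^{-\iota k\omega}\right),\qquad a(\omega)=1/b(\omega)=\exp\!\left(-\sum_{k=1}^{\infty}c_{k}e^{-\iota k\omega}\right).
\]
Both are outer functions in the Hardy space $H^{2}$ of the unit disk with constant term $1$, and by construction $f_{v}(\omega)=(\varsigma^{2}/2\pi)|b(\omega)|^{2}=(\varsigma^{2}/2\pi)|a(\omega)|^{-2}$. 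Expanding gives $b(\omega)=\sum_{i\geq0}b_{i}e^{-\iota i\omega}$ and $a(\omega)=\sum_{i\geq0}a_{i}e^{-\iota i\omega}$ with $a_{0}=b_{0}=1$.

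Third, I would establish the moment bounds on the filter coefficients by working in the Beurling algebra $\mathcal{A}_{1}=\{g:\sum_{i}(1+|i|)|g_{i}|<\infty\}$ of functions with absolutely summable Fourier coefficients weighted by $|i|$. The hypothesis $\sum_{i}i|\eta_{i}|<\infty$ puts $f_{v}\in\mathcal{A}_{1}$; since $f_{v}$ is bounded away from zero, $\log f_{v}\in\mathcal{A}_{1}$ as well (the algebra is closed under composition with functions analytic on the range of $f_{v}$, a Wiener-type lemma). This yields $\sum_{k}k|c_{k}|<\infty$. Because $\mathcal{A}_{1}$ is a Banach algebra under pointwise multiplication and is closed under the exponential map, both $b$ and $a$ lie in $\mathcal{A}_{1}$, giving $\sum_{i}i|b_{i}|<\infty$ and $\sum_{i}i|a_{i}|<\infty$.

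Finally, having the factorization in hand, I would define $E_{t}=\sum_{i=0}^{\infty}a_{i}V_{t-i}$; the summability of $\{a_{i}\}$ ensures mean-square convergence of this series, and the transfer-function identity gives $\mathbb{E}(E_{t})=0$ and $\text{var}(E_{t})=\varsigma^{2}$ together with $E_{t}$ being a white-noise (the relation $|a(\omega)|^{2}f_{v}(\omega)=\varsigma^{2}/(2\pi)$ is constant in $\omega$). Inverting using $b(\omega)a(\omega)=1$ gives the moving-average representation $V_{t}=\sum_{i=0}^{\infty}b_{i}E_{t-i}$, completing the statement. The main obstacle is the third step: rigorously transferring the smoothness of $f_{v}$ (encoded by $\sum i|\eta_{i}|<\infty$) through the nonlinear operations $\log$ and $\exp$ to obtain the same decay for $\{b_{i}\}$ and $\{a_{i}\}$; this is where the Banach-algebra structure of $\mathcal{A}_{1}$ and the fact that $f_{v}$ is bounded away from zero are essential, and it is where the bulk of the technical work lies.
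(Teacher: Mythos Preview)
The paper does not supply its own proof of this lemma: it is listed in the Appendix among the auxiliary results and simply attributed to \citet[Thm.~3.8.4]{Brillinger2001} (the case $l=1$ in Brillinger's notation). Your sketch via Szeg\H{o}--Kolmogorov outer factorization combined with the weighted Wiener (Beurling) algebra $\mathcal{A}_{1}=\{g:\sum_{i}(1+|i|)|\hat g_{i}|<\infty\}$ is a correct and self-contained route to the result, and is essentially how Brillinger's theorem is proved; in particular your identification of the crux---that the submultiplicative weight $1+|i|$ makes $\mathcal{A}_{1}$ a commutative Banach algebra satisfying the Gelfand--Raikov--Shilov condition, so that Wiener--L\'evy closure under $\log$ and $\exp$ is available and the decay $\sum_{i} i|\eta_{i}|<\infty$ propagates to the Wold filters---is exactly the point that carries the argument.
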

Let $\bm{\Lambda}_{T}$ be a covariance matrix that satisfies Lemma
\ref{lem: Brillinger}. Using the Cholesky decomposition of \citet{Akaike1969},
we may write the inverse of $\bm{\Lambda}_{T}$ in the form $\bm{\Lambda}_{T}^{-1}=\mathbf{L}_{T}^{\top}\bm{\Delta}_{T}^{-1}\mathbf{L}_{T}$,
where
\[
\mathbf{L}_{T}=\left[\begin{array}{ccccc}
1 & 0 & 0 & \cdots & 0\\
a_{11} & 1 & 0 & \cdots & 0\\
a_{22} & a_{12} & 1 &  & \vdots\\
\vdots & \vdots & \ddots & \ddots & 0\\
a_{T-1,T-1} & a_{T-2,T-1} & \cdots & a_{1,T-1} & 1
\end{array}\right]
\]
and $\bm{\Delta}_{T}=\text{diag}\left(\varsigma_{0}^{2},\varsigma_{1}^{2},\dots,\varsigma_{T-1}^{2}\right)$.
Here, $a_{ij}$ and $\varsigma_{k}^{2}$ ($i,j,k\in\left[T-1\right]$)
are obtained by solving the following problems: for each $k$
\[
\left(a_{1k},\dots a_{kk}\right)=\arg\underset{\left(c_{1},\dots,c_{k}\right)}{\min}\text{ }\mathbb{E}\left[\left(V_{t}+c_{1}V_{t-1}+\dots+c_{k}V_{t-k}\right)^{2}\right]\text{,}
\]
and $\varsigma_{k}^{2}>0$ is the minimal value of the problem. When
$k=0$, set $\varsigma_{0}^{2}=\varsigma^{2}$. We have the following
result.
\begin{lem}
\label{lem: Berk}Let $\left\{ V_{t}\right\} _{t=-\infty}^{\infty}$
be the time series from Lemma \ref{lem: Brillinger}. Then, there
exists a $J$ such that for any $j\ge J$, $\sum_{i=1}^{j}\left|a_{ij}-a_{i}\right|\le C\sum_{i=j+1}^{\infty}\left|a_{i}\right|$,
where $C>0$ is a finite constant that only depends on $f_{v}$, and
$a_{ij}$ is the $i\text{th}$ row and $j\text{th}$ column element
of the matrix $\mathbf{L}_{T}$.
\end{lem}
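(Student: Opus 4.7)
The plan is to recognize $\{a_{ij}\}_{i=1}^{j}$ and $\{a_i\}_{i=1}^{\infty}$ as solutions of finite and infinite Yule--Walker systems for the same covariance sequence $\{\eta_i\}$, subtract them to obtain a linear system whose right-hand side involves only the tail $\sum_{k>j}|a_k|$, and then invert this system using a uniform-in-$j$ bound on $\|\bm{\Lambda}_j^{-1}\|_{1}$.

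First, by Lemma \ref{lem: Brillinger}, $V_t = E_t - \sum_{i=1}^{\infty} a_i V_{t-i}$ with $\mathbb{E}(E_t V_{t-m}) = 0$ for all $m \ge 1$. Multiplying by $V_{t-m}$ and taking expectations (the interchange being justified by $\sum_i|a_i| < \infty$ and boundedness of $\{\eta_i\}$) yields the infinite system
\[
\sum_{k=1}^{\infty} a_k \eta_{|m-k|} = -\eta_m, \qquad m \ge 1.
\]
On the other hand, writing out the first-order conditions for the finite-sample minimization defining $(a_{1j}, \dots, a_{jj})$ gives the finite normal equations
\[
\sum_{k=1}^{j} a_{kj} \eta_{|m-k|} = -\eta_m, \qquad m \in [j].
\]
Subtracting these two systems over $m \in [j]$ and moving the truncated tail to the right-hand side, the error vector $\bm{\delta}_j = (a_{1j}-a_1, \dots, a_{jj}-a_j)^{\top}$ satisfies
\[
\bm{\Lambda}_j \bm{\delta}_j = \bm{\tau}_j, \qquad \tau_{mj} = \sum_{k=j+1}^{\infty} a_k \eta_{|m-k|}.
\]

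Applying the induced-norm inequality,
\[
\sum_{i=1}^{j} |a_{ij} - a_i| \;=\; \|\bm{\delta}_j\|_1 \;\le\; \|\bm{\Lambda}_j^{-1}\|_1 \, \|\bm{\tau}_j\|_1.
\]
By Corollary \ref{cor: bound the inverse}, which is applicable because $f_v$ is positive and continuous and $\sum_i i|\eta_i| < \infty$, there exists $J$ and $C_1 < \infty$ such that $\|\bm{\Lambda}_j^{-1}\|_1 \le C_1$ for all $j \ge J$. Reversing the order of summation on the tail factor,
\[
\|\bm{\tau}_j\|_1 \;\le\; \sum_{k=j+1}^{\infty} |a_k| \sum_{m=1}^{j} |\eta_{|m-k|}| \;\le\; \Bigl(\sum_{r \in \mathbb{Z}} |\eta_r|\Bigr) \sum_{k=j+1}^{\infty} |a_k|,
\]
where the bracketed quantity is finite since $\sum_i i|\eta_i| < \infty$ implies $\sum_i |\eta_i| < \infty$. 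Taking $C = C_1 \sum_{r}|\eta_r|$, a constant depending only on $f_v$, then delivers the claim.

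\textbf{Main obstacle.} The two delicate points are (i) rigorously deriving the infinite Yule--Walker system, which requires the absolute-summability hypotheses of Lemma \ref{lem: Brillinger} to justify the exchange of expectation and infinite sum, and (ii) securing a uniform-in-$j$ bound on $\|\bm{\Lambda}_j^{-1}\|_1$. The latter is the principal technical hurdle and is precisely what restricts the statement to $j \ge J$; it relies on $f_v$ being bounded away from zero and having sufficiently rapidly decaying covariances, which is exactly the content invoked from Corollary \ref{cor: bound the inverse}.
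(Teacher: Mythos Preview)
Your Yule--Walker setup is correct and is indeed the skeleton of Berk's argument: form the finite and infinite normal equations, subtract to get $\bm{\Lambda}_j\bm{\delta}_j=\bm{\tau}_j$, and bound the right-hand side by the AR tail. The bound on $\|\bm{\tau}_j\|_1$ is also fine.

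The problem is the step where you invoke Corollary~\ref{cor: bound the inverse} to control $\|\bm{\Lambda}_j^{-1}\|_1$. In this paper, the proof of Corollary~\ref{cor: bound the inverse} \emph{uses} Lemma~\ref{lem: Berk} (look at the line ``allows for the application of Lemma~\ref{lem: Berk}'' in that proof). So your argument is circular: you are assuming exactly the conclusion you are trying to establish. This is not a cosmetic issue---the uniform $\ell_1$ bound on $\bm{\Lambda}_j^{-1}$ is essentially equivalent in difficulty to the lemma itself, and cannot be taken for granted.

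The paper does not give its own proof; it defers to Berk (1974, Lemma~4). Berk's route avoids the circularity by working with the \emph{operator} norm $\|\bm{\Lambda}_j^{-1}\|_{\mathrm{op}}$, which is bounded by $(2\pi\min_\omega f_v(\omega))^{-1}$ directly from the positivity of $f_v$ (this is the content of Lemma~\ref{lem: Gray nonsing} and does not rely on Lemma~\ref{lem: Berk}). To recover the $\ell_1$ conclusion from an $\ell_2$ estimate one needs a sharper bound on $\bm{\tau}_j$ than the crude $\|\bm{\tau}_j\|_2\le\sqrt{j}\,\|\bm{\tau}_j\|_\infty$, otherwise stray factors of $j$ appear; Berk (and, classically, Baxter's inequality) handles this with additional structure from the Wiener algebra / factorization of $f_v$. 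If you want to salvage your write-up, replace the appeal to Corollary~\ref{cor: bound the inverse} with an independent spectral-norm argument along these lines and then justify the passage to $\ell_1$.
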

\begin{cor}
\label{cor: bound the inverse}Let $\left\{ V_{t}\right\} _{t=-\infty}^{\infty}$
be the time series from Lemma \ref{lem: Brillinger}. Then, $\left\Vert \bm{\Lambda}_{T}^{-1}\right\Vert _{1},\left\Vert \bm{\Lambda}_{T}^{-1}\right\Vert _{\infty}<\infty$,
independently of $T$.
\end{cor}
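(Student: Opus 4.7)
The plan is to exploit the Cholesky factorization $\bm{\Lambda}_{T}^{-1} = \mathbf{L}_{T}^{\top}\bm{\Delta}_{T}^{-1}\mathbf{L}_{T}$ displayed just above the Corollary, together with submultiplicativity of the induced matrix norms. Since $\bm{\Lambda}_{T}^{-1}$ is symmetric, $\|\bm{\Lambda}_{T}^{-1}\|_{1} = \|\bm{\Lambda}_{T}^{-1}\|_{\infty}$, and the basic estimate
\[
\|\bm{\Lambda}_{T}^{-1}\|_{\infty} \leq \|\mathbf{L}_{T}\|_{1}\,\|\bm{\Delta}_{T}^{-1}\|_{\infty}\,\|\mathbf{L}_{T}\|_{\infty}
\]
reduces the claim to establishing uniform-in-$T$ bounds on the three factors on the right.

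The diagonal factor is immediate: $\varsigma_{k}^{2}$ is the mean-squared error of the best order-$k$ linear predictor of $V_{t}$, which is nonincreasing in $k$ and whose limit is the innovation variance $\varsigma^{2}>0$ supplied by Lemma \ref{lem: Brillinger}. Hence $\varsigma_{k}^{-2}\leq\varsigma^{-2}$ uniformly in $k$, so $\|\bm{\Delta}_{T}^{-1}\|_{\infty}\leq\varsigma^{-2}$. For the row-sum norm $\|\mathbf{L}_{T}\|_{\infty}$, I would observe that row $i$ of $\mathbf{L}_{T}$ collects exactly the order-$(i-1)$ AR coefficients $a_{1,i-1},\dots,a_{i-1,i-1}$ together with a unit diagonal. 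For $i-1\geq J$ the split $|a_{k,i-1}|\leq|a_{k}|+|a_{k,i-1}-a_{k}|$ combined with Lemma \ref{lem: Berk} gives
\[
\sum_{k=1}^{i-1}|a_{k,i-1}| \leq \sum_{k=1}^{\infty}|a_{k}| + C\sum_{k=i}^{\infty}|a_{k}| \leq (1+C)\sum_{k=1}^{\infty}|a_{k}|,
\]
which is finite because $\sum_{k=1}^{\infty}|a_{k}|\leq\sum_{k=1}^{\infty}k|a_{k}|<\infty$ by Lemma \ref{lem: Brillinger}. The finitely many rows with $i-1<J$ contribute only a fixed constant.

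The delicate step, which I expect to be the main obstacle, is the column-sum norm $\|\mathbf{L}_{T}\|_{1}$, since each column of $\mathbf{L}_{T}$ mixes coefficients coming from AR fits of different orders. The $j$th column sum equals $1+\sum_{\ell=1}^{T-j}|a_{\ell,j+\ell-1}|$. Using the same splitting and applying Lemma \ref{lem: Berk} to the deviation term for $j+\ell-1\geq J$, then interchanging the order of summation, yields
\[
\sum_{\ell=1}^{T-j}|a_{\ell,j+\ell-1}-a_{\ell}| \leq C\sum_{\ell=1}^{\infty}\sum_{k=j+\ell}^{\infty}|a_{k}| = C\sum_{k=j+1}^{\infty}(k-j)|a_{k}| \leq C\sum_{k=1}^{\infty}k|a_{k}|,
\]
which is finite, uniformly in $j$, by Lemma \ref{lem: Brillinger}; the finitely many exceptional entries with $j+\ell-1<J$ again contribute only a fixed constant, and $\sum_{\ell=1}^{\infty}|a_{\ell}|<\infty$ handles the leading part of the split. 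Assembling the three uniform bounds via submultiplicativity and invoking the symmetry of $\bm{\Lambda}_{T}^{-1}$ completes the argument.
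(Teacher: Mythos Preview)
Your proposal is correct and follows essentially the same route as the paper: factor $\bm{\Lambda}_{T}^{-1}=\mathbf{L}_{T}^{\top}\bm{\Delta}_{T}^{-1}\mathbf{L}_{T}$, use submultiplicativity, and bound $\|\mathbf{L}_{T}\|_{\infty}$ and $\|\mathbf{L}_{T}\|_{1}$ via Lemma~\ref{lem: Berk} together with the summability $\sum_{k}k|a_{k}|<\infty$ from Lemma~\ref{lem: Brillinger}. Your handling of the diagonal factor (monotonicity $\varsigma_{k}^{2}\downarrow\varsigma^{2}>0$) and of the column-sum bound (the clean interchange giving $\sum_{k}(k-j)|a_{k}|\le\sum_{k}k|a_{k}|$) are in fact tidier than the paper's versions of the same steps.
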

\begin{proof}
Write $\bm{\Lambda}_{T}^{-1}=\mathbf{L}_{T}^{\top}\bm{\Delta}_{T}^{-1}\mathbf{L}_{T}$
and apply \citet[Thm. 4.6.5]{Seber2008} to obtain

\[
\left\Vert \bm{\Lambda}_{T}^{-1}\right\Vert _{1}\le\left\Vert \mathbf{L}_{T}^{\top}\right\Vert _{1}\left\Vert \bm{\Delta}_{T}^{-1}\right\Vert _{1}\left\Vert \mathbf{L}_{T}\right\Vert _{1}\text{ and }\left\Vert \bm{\Lambda}_{T}^{-1}\right\Vert _{\infty}\le\left\Vert \mathbf{L}_{T}^{\top}\right\Vert _{\infty}\left\Vert \bm{\Delta}_{T}^{-1}\right\Vert _{\infty}\left\Vert \mathbf{L}_{T}\right\Vert _{\infty}\text{.}
\]
Note that $\left\Vert \mathbf{L}_{T}^{\top}\right\Vert _{1}=\left\Vert \mathbf{L}_{T}\right\Vert _{\infty}$,
$\left\Vert \mathbf{L}_{T}^{\top}\right\Vert _{\infty}=\left\Vert \mathbf{L}_{T}\right\Vert _{1}$,
and $\left\Vert \bm{\Delta}_{T}^{-1}\right\Vert _{1}=\left\Vert \bm{\Delta}_{T}^{-1}\right\Vert _{\infty}$.
Thus we require only three computations.

Let $a_{0k}=1$ and $a_{0}=1$ and write

\[
\left\Vert \mathbf{L}_{T}\right\Vert _{\infty}=\max_{j\in\left[T-1\right]}\left\{ \left|a_{0k}\right|+\left|a_{1j}\right|+\dots+\left|a_{jj}\right|\right\} \text{.}
\]
By the triangle inequality, we obtain
\[
\sum_{i=0}^{j}\left|a_{ij}\right|=\sum_{i=0}^{j}\left|a_{ij}-a_{i}+a_{i}\right|\le\sum_{i=0}^{j}\left|a_{ij}-a_{i}\right|+\sum_{i=0}^{j}\left|a_{i}\right|\text{,}
\]
which, for sufficiently large $T$, allows for the application of
Lemma \ref{lem: Berk} in order to obtain
\[
\sum_{i=0}^{j}\left|a_{ij}\right|\le\sum_{i=0}^{j}\left|a_{i}\right|+C\sum_{i=j+1}^{\infty}\left|a_{i}\right|\text{,}
\]
where $C>0$ is a finite constant that is independent of $T$. Since
$\sum_{i=0}^{\infty}i\left|a_{i}\right|<\infty$, we have the fact
that $\sum_{i=0}^{\infty}i\left|a_{i}\right|<\infty$ and $\sum_{i=0}^{j}\left|a_{ij}\right|<\infty$,
for any $j\in\left[T-1\right]$ . Thus, $\left\Vert \mathbf{L}_{T}\right\Vert _{\infty}$
is bounded, independently of $T$.

Next, we write $\left\Vert \mathbf{L}_{T}\right\Vert _{1}$ as
\begin{equation}
\left\Vert \mathbf{L}_{T}\right\Vert _{1}=\max\left(1+\left\{ \begin{array}{c}
0\\
\left|a_{1,T-1}\right|\\
\left|a_{2,T-1}\right|+\left|a_{1,T-2}\right|\\
\left|a_{3,T-1}\right|+\left|a_{2,T-2}\right|+\left|a_{1,T-3}\right|\\
\vdots\\
\left|a_{T-1,T-1}\right|+\left|a_{T-2,T-2}\right|+\dots+\left|a_{22}\right|+\left|a_{11}\right|
\end{array}\right\} \right)\text{.}\label{eq: L1}
\end{equation}

Using Lemma \ref{lem: Berk}, we can bound $\left|a_{1,T-1}\right|$
by $\sum_{i=1}^{T-1}\left|a_{i,T-1}-a_{i}\right|\le C\sum_{i=T}^{\infty}\left|a_{i}\right|$,
which convergences, since $\sum_{i=0}^{\infty}\left|a_{i}\right|<\infty$.
Similarly, we can use Lemma \ref{lem: Berk} to bound $\left|a_{2,T-1}\right|+\left|a_{1,T-2}\right|$
by

\begin{align*}
\sum_{i=1}^{T-1}\left|a_{i,T-1}-a_{i}\right|+\sum_{i=1}^{T-2}\left|a_{i,T-2}-a_{i}\right| & \le C\sum_{i=T}^{\infty}\left|a_{i}\right|+C\sum_{i=T-1}^{\infty}\left|a_{i}\right|\\
 & =C\left(\sum_{i=T}^{\infty}2\left|a_{i}\right|+\left|a_{T-1}\right|\right)\text{.}
\end{align*}
This converges, since $\sum_{i=0}^{\infty}\left|a_{i}\right|<\infty$.
Continuing the pattern, we arrive at the final expression
\[
\left|a_{T-1,T-1}\right|+\left|a_{T-2,T-2}\right|+\dots+\left|a_{22}\right|+\left|a_{11}\right|\text{,}
\]
which can be bounded by
\begin{equation}
\sum_{i=1}^{T-1}\left|a_{i,T-1}-a_{i}\right|+\sum_{i=1}^{T-2}\left|a_{i,T-2}-a_{i}\right|+\dots+\sum_{i=1}^{T-2}\left|a_{i,T-2}-a_{i}\right|\text{.}\label{eq: pre berk}
\end{equation}
Applying Lemma \ref{lem: Berk}, we bound (\ref{eq: pre berk}) by
\begin{eqnarray*}
 &  & C\sum_{i=T}^{\infty}\left|a_{i}\right|+C\sum_{i=T-1}^{\infty}\left|a_{i}\right|+\dots+C\sum_{i=2}^{\infty}\left|a_{i}\right|+C\sum_{i=1}^{\infty}\left|a_{i}\right|\\
 & = & C\left(\sum_{i=T}^{\infty}\left|a_{i}\right|+\sum_{i=T-1}^{\infty}\left|a_{i}\right|+\dots+\sum_{i=2}^{\infty}\left|a_{i}\right|+\sum_{i=1}^{\infty}\left|a_{i}\right|\right)\\
 & = & C\left(\sum_{i=T}^{\infty}T\left|a_{i}\right|+\left(T-1\right)\left|a_{T-1}\right|+\dots+2\left|a_{2}\right|+\left|a_{1}\right|\right)\\
 & \le & C\sum_{i=1}^{\infty}i\left|a_{i}\right|\text{.}
\end{eqnarray*}
The expression $C\sum_{i=1}^{\infty}i\left|a_{i}\right|$ converges
by Lemma \ref{lem: Brillinger}. Furthermore, $1+C\sum_{i=1}^{\infty}i\left|a_{i}\right|$
bounds every term of (\ref{eq: L1}), and hence bounds the maximum
term. Therefore, $\left\Vert \mathbf{L}_{T}\right\Vert _{1}$ is bounded,
independently of $T$.

Lastly, must bound $\left\Vert \bm{\Delta}_{T}^{-1}\right\Vert _{1}$.
Since $\bm{\Delta}_{T}^{-1}$ is a diagonal matrix and each $0<\varsigma_{k}^{2}<\infty$,
\[
\left\Vert \bm{\Delta}_{T}^{-1}\right\Vert _{1}=\max_{k\in\left[T-1\right]\cup\left\{ 0\right\} }\varsigma_{k}^{-2}
\]
is bounded since each column contains a single finite and positive
term. The proof is completed by combining the three bounds.
\end{proof}
Lemma \ref{Lem: Anderson 1} appears as Theorem 10.2.7, in \citet{Anderson1971}.
Lemma \ref{lem: Gray nonsing} is implied by Theorem 5.2 in \citet{Gray2006}.
Lemma \ref{Lem: Ibragimov-1} appears as Theorem 8 in \citet[Ch. VII]{Ibragimov1978}.
Lemma \ref{lem: billingsley-1} appears as Theorem 4.2 in \citet{Billingsley1968}.
Lemma 6 is obtained by taking the $l=1$ case (in the notation of
the source material) of \citet[Thm. 3.8.4]{Brillinger2001}. Lemma
\ref{lem: Berk} can be derived from Lemma 4 of \citet{Berk1974}.

\section*{Acknowledgements}

The author is indebted to Paul Kabaila, for his interest in this project
and for his fruitful comments. The research is funded under Australian
Research Council grants DE170101134 and DP180101192.

\bibliographystyle{apalike2}
\bibliography{MASTERBIB}

\end{document}